\documentclass[12pt]{article}
\newcommand{\authorfootnotes}{\renewcommand\thefootnote{\@fnsymbol\c@footnote}}%
\usepackage{amssymb,amsmath, amsfonts, amsthm}


\usepackage{graphicx}
\usepackage{tikz}
\usepackage{enumerate}

\newcommand{\cp}{\,\square\,}

\newcommand{\Z}{\mathbb{Z}}
\newcommand{\Q}{\mathbb{Q}}
\newcommand{\R}{\mathbb{R}}
\newcommand{\C}{\mathbb{C}}
\newcommand{\N}{\mathbb{N}}
\newcommand{\chg}[2]{{\cal Q}_#1^{#2}}
\newcommand{\eline}{\begin{align*}\hline\end{align*}}

\newcommand{\vertex}{\node[vertex]}
\tikzstyle{vertex}=[circle, draw, inner sep=0pt, minimum size=6pt]

\usetikzlibrary{arrows}

\newtheorem{thm}{Theorem}[section]
\newtheorem{lem}[thm]{Lemma}
\newtheorem{cor}[thm]{Corollary}
\newtheorem{prop}[thm]{Proposition}

\newcommand{\ideg}{{\rm indeg}}
\newcommand{\odeg}{{\rm outdeg}}

\newcommand{\D}{{\rm DOM}}

\newcommand{\ugr}[1]{\widehat{#1}}
\newcommand{\DR}[1]{{\textcolor{blue}{#1}}}

\def\keywords{\vspace{.5em}
{\textit{Keywords}:\,\relax%
}}
\def\endkeywords{\par}
\def\class{\vspace{.5em}
{\textit{AMS classification}:\,\relax%
}}
\def\endclass{\par}

\textwidth15cm
\textheight20cm
\oddsidemargin 0.4cm
\evensidemargin 0.4cm
\voffset-1cm

\begin{document}

\title{The Cut Method on Hypergraphs for the Wiener Index}

\author{
Sandi Klav\v zar $^{a,b,c}$  \and 	Ga\v{s}per Domen Romih $^{a}$
}

\date{\today}

\maketitle

\begin{center}
	$^a$ Faculty of Mathematics and Physics, University of Ljubljana, Slovenia\\
	{\tt sandi.klavzar@fmf.uni-lj.si}\\
	{\tt gasperdomen.romih@fmf.uni-lj.si}\\
	\medskip
	
	$^b$ Faculty of Natural Sciences and Mathematics, University of Maribor, Slovenia\\
	\medskip
	
	$^c$ Institute of Mathematics, Physics and Mechanics, Ljubljana, Slovenia\\
	
\end{center}

\begin{abstract}
The cut method has been proved to be extremely useful in chemical graph theory. In this paper the cut method is extended to hypergraphs. More precisely, the method is developed for the Wiener index of $k$-uniform partial cube-hypergraphs. The method is applied to cube-hypergraphs and hypertrees. Extensions of the method to hypergraphs arising in chemistry which are not necessary $k$-uniform and/or not necessary linear are also developed.
\end{abstract}

\noindent
{\bf Keywords:} hypergraph; Wiener index;  cut method; partial cube-hypergraph; hypertree; phenylene; Clar structure

\maketitle

\section{Introduction}

The cut method, whose standard form was introduced in 1995 in~\cite{klavzar-1995}, has had a remarkable response in chemical graph theory. The method originally designed for the Wiener index of partial cubes was later developed for many other topological indices and has undergone many generalizations to more general situations than partial cubes. This applies in itself to many applications in mathematical chemistry where topological indices play important role. The basic idea is to first find a partition of the edges of a (molecular) graph and by removing parts of this partition construct smaller (weighted) graphs, called quotient graphs. After that, we infer back to the original graph from the quotient graphs.  The state of research on the cut method up to 2015 is summarized in the survey article~\cite{klavzar-2015}.  The method is still the subject of ongoing research, see~\cite{akhter-2021, arockiaraj-2022, arockiaraj-2016, brezovnik-2021, crepnjak-2017, tratnik-2020, tratnik-2021} as well as references therein. 
  
Hypergraphs form a structure that greatly generalizes the concept of a graph. In chemical graph theory, the standard method of representing molecules is by means of associated (chemical) graphs.  However, some molecules are more complicated than others and sometimes it is more convenient and more adequate to represent them as hypergraphs, see~\cite{gutman-1999, konstantinova-2001} for some chemical problems dealing with hypergraph theory. As a result, various problems of importance in mathematical chemistry have been investigated on hypergraphs, including spectral aspects~\cite{andreoti-2022, lin-2018, saha-2022} and different topological indices~\cite{weng-2020, weng-2022}. Very recently, while investigating molecular representations in drug design, a hypergraph-based topological framework was designed to characterize molecular structures and interactions at atomic level~\cite{liu-2021}. Interestingly, in the very same year when the cut method was introduced, Burosch and Ceccherini published the paper~\cite{burosch-1995} on isometric embeddings into hypergraphs, which is the second main source for the present paper.

The Wiener index is one of the most researched topics in the whole field of chemical graph theory. As already mentioned, the cut method was first designed for the Wiener index of graphs. In the last few years, the Wiener index has received a lot of attention also on hypergraphs. In~\cite{sun-2017} the authors investigate, among others, $3$-uniform paths and lower bounds on the Wiener index of $k$-uniform hypergraphs. In~\cite{dobrynin-2019, li-2020b} hypergraphs are constructed from trees and their Wiener index investigated. The effect of some transformations on the Wiener index of a hypergraph and extremal hypertrees with respect to the Wiener index is studied in~\cite{liu-2020}. The $k$-uniform unicyclic hypergraphs with maximum/minimum and second maximum/minimum Wiener index are determined in~\cite{zou-2020}, while the Wiener index of some composite hypergraphs and sunflower hypergraphs is the topic of~\cite{ashraf-2022}. Finally, in~\cite{che-2022} the concept of the $k$-Wiener index is introduced and studied on the so called $k$-plex hypergraphs.

 We proceed as follows. In the next section we introduce the mathematical machinery on hypergraphs needed latter on. In particular, partial cube-hypergraphs are defined and their characterizations recalled. In Section~\ref{sec:cut} we develop the cut method for the Wiener index of a hypergraph. In the last section we provide applications and extensions of the cut method including cube-hypergraphs, hypertrees, and the so called linear phenylene hypergraphs.
 
\section{Preliminaries}
In this section, we set the scene for the hypergraph cut method. In the first part, we introduce the necessary concepts about hypergraphs, focusing on distance and their Cartesian products. We then introduce partial cube-hypergraphs on which the cut method will operate and recall two of their characterizations. 
\subsection{Hypergraphs}

A hypergraph $H=(V(H),E(H))$ has the vertex set $V(H)$ and the edge set $E(H)$, where each edge $e \in E(H)$ is a non-empty subset of $V(H)$. $H$ is {\em $k$-uniform} if the size of every edge $e \in E(H)$ is $k$ and is {\em linear} if $|e \cap e'| \le 1$ for every $e,e' \in E(H)$, $e \neq e'$. Let $H$ and $H'$ be hypergraphs. If $V(H') \subseteq V(H)$ and $E(H') \subseteq E(H)$ we say that $H' \subseteq H$ is a {\em subhypergraph} of $H$. Clearly, if $H$ is $k$-uniform, then $H'$ is also $k$-uniform. If $F \subseteq E(H)$, then $H - F$ denotes the subhypergraph of $H$ obtained from $H$ by removing all the edges from $F$.

Let $u$ and $v$ be different vertices of $H$. A {\em $u,v$-path of length} $s \ge 1$  in $H$ is a sequence $u_0 = u, e_1, u_1, \ldots, e_s, u_{s} = v$, where $u_i$ are pairwise different vertices, $e_i$ are pairwise different edges, and $\{u_{i-1}, u_i \} \subseteq e_i $ for $i \in [s] = \{1,\ldots, s\}$. The {\em distance} $d_H(u, v)$ between vertices $u$ and $v$ is the length of a shortest $u,v$-path. We also set $d_H(u,u) = 0$. A subhypergraph $H' \subseteq H$ is {\em isometric} if $d_{H'}(u,v) = d_H(u,v)$ holds for all $u,v \in V(H')$. We further say that a set of vertices $X \subseteq V(H)$ is {\em convex} in $H$ if for every $u,v \in X$ and every $z \in V(H)$, the equality $d_H(x,z) + d_H(z, y) = d_H(x,y)$ implies $z \in X$. The {\em Wiener index} of a hypergraph $H$ is defined as the sum of the distances between all unordered pairs of vertices of $H$, that is,
$$
W(H) = \sum_{\{u,v\} \in \binom{V(H) }{2}}d_H(u,v).
$$

The {\em Cartesian product} $H \cp H'$ of hypergraphs $H$ and $H'$ is a hypergraph  with the vertex set $V(H) \times V(H')$ and the edge set 
$$\{\{u\} \times e' :\ u \in V(H),\ e' \in E(H') \} \cup \{ e \times \{u'\}  :\ e \in E(H),\ u' \in V(H')  \}.$$
 Just as Cartesian products of graphs, Cartesian products of hypergraphs have several nice properties, cf ~\cite{hammack-2016, hellmuth-2016}. In particular, if $H$ and $H'$ are $k$-uniform  hypergraphs, then $H \cp H'$ is also $k$-uniform, and the Cartesian product operation is associative. For $k \ge 2$, let ${\cal Q}_k$ denote the hypergraph with $k$ vertices and a single edge containing all the vertices. For $n \ge 1$, the {\em $k$-uniform $n$-cube} ${\cal Q}_k^{n}$ is the Cartesian product of $k$ copies of ${\cal Q}_k$. See Fig.~\ref{fig:cubes} where $\chg{3}{1}$, $\chg{3}{2}$, and $\chg{3}{3}$ are presented.

\begin{figure}[ht!]
\centering

\begin{tikzpicture}[scale=0.45]
	\begin{scope}[name=hedge]
		\def\vr{2pt}
		\node  (0) at (0, 0.5) {};
		\node  (1) at (0, 0) {};
		\node  (2) at (7, 0) {};
		\node (3) at (7, 0.5) {};
		\draw (0.center) to (3.center);
		\draw [in=180, out=0] (1.center) to (2.center);
		\draw [bend left=90, looseness=1.50] (3.center) to (2.center);
		\draw [bend right=90, looseness=1.50] (0.center) to (1.center);		
		\draw (0,0.25)  [fill=black] circle (\vr);
		\draw (3.5,0.25)  [fill=black] circle (\vr);
		\draw (3.5,-1)  [fill=none] circle node {$\chg{3}{1}$};
		\draw (7,0.25)  [fill=black] circle (\vr);
	\end{scope}
\end{tikzpicture}
\hspace{3pt}
\begin{tikzpicture}[scale=0.45]
	\begin{scope}[rotate=90,xshift=0.211cm, yshift=-0.27cm]
		\def\vr{2pt}
		\node  (0) at (0, 0.5) {};
		\node  (1) at (0, 0) {};
		\node  (2) at (7, 0) {};
		\node (3) at (7, 0.5) {};
		\draw (0.center) to (3.center);
		\draw [in=180, out=0] (1.center) to (2.center);
		\draw [bend left=90, looseness=1.50] (3.center) to (2.center);
		\draw [bend right=90, looseness=1.50] (0.center) to (1.center);		
	\end{scope}
	\begin{scope}[rotate=90,xshift=0.211cm, yshift=-3.72cm]
		\def\vr{2pt}
		\node  (0) at (0, 0.5) {};
		\node  (1) at (0, 0) {};
		\node  (2) at (7, 0) {};
		\node (3) at (7, 0.5) {};
		\draw (0.center) to (3.center);
		\draw [in=180, out=0] (1.center) to (2.center);
		\draw [bend left=90, looseness=1.50] (3.center) to (2.center);
		\draw [bend right=90, looseness=1.50] (0.center) to (1.center);		
	\end{scope}

\begin{scope}[rotate=90,xshift=0.211cm, yshift=-7.2cm]
		\def\vr{2pt}
		\node  (0) at (0, 0.5) {};
		\node  (1) at (0, 0) {};
		\node  (2) at (7, 0) {};
		\node (3) at (7, 0.5) {};
		\draw (0.center) to (3.center);
		\draw [in=180, out=0] (1.center) to (2.center);
		\draw [bend left=90, looseness=1.50] (3.center) to (2.center);
		\draw [bend right=90, looseness=1.50] (0.center) to (1.center);		
	\end{scope}
	\begin{scope}[name=hedge,yshift=3.5cm]
		\def\vr{2pt}
		\node  (0) at (0, 0.5) {};
		\node  (1) at (0, 0) {};
		\node  (2) at (7, 0) {};
		\node (3) at (7, 0.5) {};
		\draw (0.center) to (3.center);
		\draw [in=180, out=0] (1.center) to (2.center);
		\draw [bend left=90, looseness=1.50] (3.center) to (2.center);
		\draw [bend right=90, looseness=1.50] (0.center) to (1.center);		
		\draw (0,0.25)  [fill=black] circle (\vr);
		\draw (3.5,0.25)  [fill=black] circle (\vr);
		
		\draw (7,0.25)  [fill=black] circle (\vr);
	\end{scope}
		\begin{scope}[name=hedge,yshift=6.9cm]
		\def\vr{2pt}
		\node  (0) at (0, 0.5) {};
		\node  (1) at (0, 0) {};
		\node  (2) at (7, 0) {};
		\node (3) at (7, 0.5) {};
		\draw (0.center) to (3.center);
		\draw [in=180, out=0] (1.center) to (2.center);
		\draw [bend left=90, looseness=1.50] (3.center) to (2.center);
		\draw [bend right=90, looseness=1.50] (0.center) to (1.center);		
		\draw (0,0.25)  [fill=black] circle (\vr);
		\draw (3.5,0.25)  [fill=black] circle (\vr);
		\draw (7,0.25)  [fill=black] circle (\vr);
	\end{scope}
	
	\begin{scope}[name=hedge]
		\def\vr{2pt}
		\node  (0) at (0, 0.5) {};
		\node  (1) at (0, 0) {};
		\node  (2) at (7, 0) {};
		\node (3) at (7, 0.5) {};
		\draw (0.center) to (3.center);
		\draw [in=180, out=0] (1.center) to (2.center);
		\draw [bend left=90, looseness=1.50] (3.center) to (2.center);
		\draw [bend right=90, looseness=1.50] (0.center) to (1.center);		
		\draw (0,0.25)  [fill=black] circle (\vr);
		\draw (3.5,0.25)  [fill=black] circle (\vr);
		\draw (3.5,-1)  [fill=none]  node {$\chg{3}{2}$};
		\draw (7,0.25)  [fill=black] circle (\vr);
	\end{scope}
\end{tikzpicture}
\hspace{2pt}
\begin{tikzpicture}[scale=0.45]

	\begin{scope}

	\begin{scope}[rotate=60,xshift=0.211cm, yshift=-0.13cm]
		\def\vr{2pt}
		\node  (0) at (0, 0.5) {};
		\node  (1) at (0, 0) {};
		\node  (2) at (4, 0) {};
		\node (3) at (4, 0.5) {};
		\draw (0.center) to (3.center);
		\draw [in=180, out=0] (1.center) to (2.center);
		\draw [bend left=90, looseness=1.50] (3.center) to (2.center);
		\draw [bend right=90, looseness=1.50] (0.center) to (1.center);		
	\end{scope}
	
		\begin{scope}[rotate=60,xshift=1.9cm, yshift=-3.13cm]
		\def\vr{2pt}
		\node  (0) at (0, 0.5) {};
		\node  (1) at (0, 0) {};
		\node  (2) at (4, 0) {};
		\node (3) at (4, 0.5) {};
		\draw (0.center) to (3.center);
		\draw [in=180, out=0] (1.center) to (2.center);
		\draw [bend left=90, looseness=1.50] (3.center) to (2.center);
		\draw [bend right=90, looseness=1.50] (0.center) to (1.center);		
	\end{scope}
	
		\begin{scope}[rotate=60,xshift=3.63cm, yshift=-6.13cm]
		\def\vr{2pt}
		\node  (0) at (0, 0.5) {};
		\node  (1) at (0, 0) {};
		\node  (2) at (4, 0) {};
		\node (3) at (4, 0.5) {};
		\draw (0.center) to (3.center);
		\draw [in=180, out=0] (1.center) to (2.center);
		\draw [bend left=90, looseness=1.50] (3.center) to (2.center);
		\draw [bend right=90, looseness=1.50] (0.center) to (1.center);		
	\end{scope}

	\begin{scope}[name=hedge]
		\def\vr{2pt}
		\node  (0) at (0, 0.5) {};
		\node  (1) at (0, 0) {};
		\node  (2) at (7, 0) {};
		\node (3) at (7, 0.5) {};
		\draw (0.center) to (3.center);
		\draw [in=180, out=0] (1.center) to (2.center);
		\draw [bend left=90, looseness=1.50] (3.center) to (2.center);
		\draw [bend right=90, looseness=1.50] (0.center) to (1.center);		
		\draw (0,0.25)  [fill=black] circle (\vr) ;
		\draw (3.5,0.25)  [fill=black] circle (\vr);
		\draw (3.5,-1)  [fill=none] node {$\chg{3}{3}$};
		\draw (7,0.25)  [fill=black] circle (\vr);
	\end{scope}
	
	\begin{scope}[name=hedge,xshift=0.95cm, yshift=1.72cm]
		\def\vr{2pt}
		\node  (0) at (0, 0.5) {};
		\node  (1) at (0, 0) {};
		\node  (2) at (7, 0) {};
		\node (3) at (7, 0.5) {};
		\draw (0.center) to (3.center);
		\draw [in=180, out=0] (1.center) to (2.center);
		\draw [bend left=90, looseness=1.50] (3.center) to (2.center);
		\draw [bend right=90, looseness=1.50] (0.center) to (1.center);		
		\draw (0,0.25)  [fill=black] circle (\vr);
		\draw (3.5,0.25)  [fill=black] circle (\vr);
		\draw (7,0.25)  [fill=black] circle (\vr);
	\end{scope}
\begin{scope}[name=hedge,xshift=1.9cm, yshift=3.4cm]
		\def\vr{2pt}
		\node  (0) at (0, 0.5) {};
		\node  (1) at (0, 0) {};
		\node  (2) at (7, 0) {};
		\node (3) at (7, 0.5) {};
		\draw (0.center) to (3.center);
		\draw [in=180, out=0] (1.center) to (2.center);
		\draw [bend left=90, looseness=1.50] (3.center) to (2.center);
		\draw [bend right=90, looseness=1.50] (0.center) to (1.center);		
		\draw (0,0.25)  [fill=black] circle (\vr);
		\draw (3.5,0.25)  [fill=black] circle (\vr);
		\draw (7,0.25)  [fill=black] circle (\vr);
	\end{scope}
\end{scope}

	\begin{scope}[yshift = 4.5cm]

	\begin{scope}[rotate=60,xshift=0.211cm, yshift=-0.13cm]
		\def\vr{2pt}
		\node  (0) at (0, 0.5) {};
		\node  (1) at (0, 0) {};
		\node  (2) at (4, 0) {};
		\node (3) at (4, 0.5) {};
		\draw (0.center) to (3.center);
		\draw [in=180, out=0] (1.center) to (2.center);
		\draw [bend left=90, looseness=1.50] (3.center) to (2.center);
		\draw [bend right=90, looseness=1.50] (0.center) to (1.center);		
	\end{scope}
	
		\begin{scope}[rotate=60,xshift=1.9cm, yshift=-3.13cm]
		\def\vr{2pt}
		\node  (0) at (0, 0.5) {};
		\node  (1) at (0, 0) {};
		\node  (2) at (4, 0) {};
		\node (3) at (4, 0.5) {};
		\draw (0.center) to (3.center);
		\draw [in=180, out=0] (1.center) to (2.center);
		\draw [bend left=90, looseness=1.50] (3.center) to (2.center);
		\draw [bend right=90, looseness=1.50] (0.center) to (1.center);		
	\end{scope}
	
		\begin{scope}[rotate=60,xshift=3.63cm, yshift=-6.13cm]
		\def\vr{2pt}
		\node  (0) at (0, 0.5) {};
		\node  (1) at (0, 0) {};
		\node  (2) at (4, 0) {};
		\node (3) at (4, 0.5) {};
		\draw (0.center) to (3.center);
		\draw [in=180, out=0] (1.center) to (2.center);
		\draw [bend left=90, looseness=1.50] (3.center) to (2.center);
		\draw [bend right=90, looseness=1.50] (0.center) to (1.center);		
	\end{scope}

	\begin{scope}[name=hedge]
		\def\vr{2pt}
		\node  (0) at (0, 0.5) {};
		\node  (1) at (0, 0) {};
		\node  (2) at (7, 0) {};
		\node (3) at (7, 0.5) {};
		\draw (0.center) to (3.center);
		\draw [in=180, out=0] (1.center) to (2.center);
		\draw [bend left=90, looseness=1.50] (3.center) to (2.center);
		\draw [bend right=90, looseness=1.50] (0.center) to (1.center);		
		\draw (0,0.25)  [fill=black] circle (\vr);
		\draw (3.5,0.25)  [fill=black] circle (\vr);
		\draw (7,0.25)  [fill=black] circle (\vr);
	\end{scope}
	
	\begin{scope}[name=hedge,xshift=0.95cm, yshift=1.72cm]
		\def\vr{2pt}
		\node  (0) at (0, 0.5) {};
		\node  (1) at (0, 0) {};
		\node  (2) at (7, 0) {};
		\node (3) at (7, 0.5) {};
		\draw (0.center) to (3.center);
		\draw [in=180, out=0] (1.center) to (2.center);
		\draw [bend left=90, looseness=1.50] (3.center) to (2.center);
		\draw [bend right=90, looseness=1.50] (0.center) to (1.center);		
		\draw (0,0.25)  [fill=black] circle (\vr);
		\draw (3.5,0.25)  [fill=black] circle (\vr);
		\draw (7,0.25)  [fill=black] circle (\vr);
	\end{scope}
\begin{scope}[name=hedge,xshift=1.9cm, yshift=3.4cm]
		\def\vr{2pt}
		\node  (0) at (0, 0.5) {};
		\node  (1) at (0, 0) {};
		\node  (2) at (7, 0) {};
		\node (3) at (7, 0.5) {};
		\draw (0.center) to (3.center);
		\draw [in=180, out=0] (1.center) to (2.center);
		\draw [bend left=90, looseness=1.50] (3.center) to (2.center);
		\draw [bend right=90, looseness=1.50] (0.center) to (1.center);		
		\draw (0,0.25)  [fill=black] circle (\vr);
		\draw (3.5,0.25)  [fill=black] circle (\vr);
		\draw (7,0.25)  [fill=black] circle (\vr);
	\end{scope}
\end{scope}
	
	
		\begin{scope}[yshift = 9cm]

	\begin{scope}[rotate=60,xshift=0.211cm, yshift=-0.13cm]
		\def\vr{2pt}
		\node  (0) at (0, 0.5) {};
		\node  (1) at (0, 0) {};
		\node  (2) at (4, 0) {};
		\node (3) at (4, 0.5) {};
		\draw (0.center) to (3.center);
		\draw [in=180, out=0] (1.center) to (2.center);
		\draw [bend left=90, looseness=1.50] (3.center) to (2.center);
		\draw [bend right=90, looseness=1.50] (0.center) to (1.center);		
	\end{scope}
	
		\begin{scope}[rotate=60,xshift=1.9cm, yshift=-3.13cm]
		\def\vr{2pt}
		\node  (0) at (0, 0.5) {};
		\node  (1) at (0, 0) {};
		\node  (2) at (4, 0) {};
		\node (3) at (4, 0.5) {};
		\draw (0.center) to (3.center);
		\draw [in=180, out=0] (1.center) to (2.center);
		\draw [bend left=90, looseness=1.50] (3.center) to (2.center);
		\draw [bend right=90, looseness=1.50] (0.center) to (1.center);		
	\end{scope}
	
		\begin{scope}[rotate=60,xshift=3.63cm, yshift=-6.13cm]
		\def\vr{2pt}
		\node  (0) at (0, 0.5) {};
		\node  (1) at (0, 0) {};
		\node  (2) at (4, 0) {};
		\node (3) at (4, 0.5) {};
		\draw (0.center) to (3.center);
		\draw [in=180, out=0] (1.center) to (2.center);
		\draw [bend left=90, looseness=1.50] (3.center) to (2.center);
		\draw [bend right=90, looseness=1.50] (0.center) to (1.center);		
	\end{scope}

	\begin{scope}[name=hedge]
		\def\vr{2pt}
		\node  (0) at (0, 0.5) {};
		\node  (1) at (0, 0) {};
		\node  (2) at (7, 0) {};
		\node (3) at (7, 0.5) {};
		\draw (0.center) to (3.center);
		\draw [in=180, out=0] (1.center) to (2.center);
		\draw [bend left=90, looseness=1.50] (3.center) to (2.center);
		\draw [bend right=90, looseness=1.50] (0.center) to (1.center);		
		\draw (0,0.25)  [fill=black] circle (\vr);
		\draw (3.5,0.25)  [fill=black] circle (\vr);
		\draw (7,0.25)  [fill=black] circle (\vr);
	\end{scope}
	
	\begin{scope}[name=hedge,xshift=0.95cm, yshift=1.72cm]
		\def\vr{2pt}
		\node  (0) at (0, 0.5) {};
		\node  (1) at (0, 0) {};
		\node  (2) at (7, 0) {};
		\node (3) at (7, 0.5) {};
		\draw (0.center) to (3.center);
		\draw [in=180, out=0] (1.center) to (2.center);
		\draw [bend left=90, looseness=1.50] (3.center) to (2.center);
		\draw [bend right=90, looseness=1.50] (0.center) to (1.center);		
		\draw (0,0.25)  [fill=black] circle (\vr);
		\draw (3.5,0.25)  [fill=black] circle (\vr);
		\draw (7,0.25)  [fill=black] circle (\vr);
	\end{scope}
\begin{scope}[name=hedge,xshift=1.9cm, yshift=3.4cm]
		\def\vr{2pt}
		\node  (0) at (0, 0.5) {};
		\node  (1) at (0, 0) {};
		\node  (2) at (7, 0) {};
		\node (3) at (7, 0.5) {};
		\draw (0.center) to (3.center);
		\draw [in=180, out=0] (1.center) to (2.center);
		\draw [bend left=90, looseness=1.50] (3.center) to (2.center);
		\draw [bend right=90, looseness=1.50] (0.center) to (1.center);		
		\draw (0,0.25)  [fill=black] circle (\vr);
		\draw (3.5,0.25)  [fill=black] circle (\vr);
		\draw (7,0.25)  [fill=black] circle (\vr);
	\end{scope}
\end{scope}

\begin{scope}
	\begin{scope}[rotate=90,xshift=0.211cm, yshift=-0.27cm]
		\def\vr{2pt}
		\node  (0) at (0, 0.5) {};
		\node  (1) at (0, 0) {};
		\node  (2) at (9.1, 0) {};
		\node (3) at (9.1, 0.5) {};
		\draw (0.center) to (3.center);
		\draw [in=180, out=0] (1.center) to (2.center);
		\draw [bend left=90, looseness=1.50] (3.center) to (2.center);
		\draw [bend right=90, looseness=1.50] (0.center) to (1.center);		
	\end{scope}
	
		\begin{scope}[rotate=90,xshift=0.211cm, yshift=-3.7cm]
		\def\vr{2pt}
		\node  (0) at (0, 0.5) {};
		\node  (1) at (0, 0) {};
		\node  (2) at (9.1, 0) {};
		\node (3) at (9.1, 0.5) {};
		\draw (0.center) to (3.center);
		\draw [in=180, out=0] (1.center) to (2.center);
		\draw [bend left=90, looseness=1.50] (3.center) to (2.center);
		\draw [bend right=90, looseness=1.50] (0.center) to (1.center);		
	\end{scope}
	
			\begin{scope}[rotate=90,xshift=0.211cm, yshift=-7.2cm]
		\def\vr{2pt}
		\node  (0) at (0, 0.5) {};
		\node  (1) at (0, 0) {};
		\node  (2) at (9.1, 0) {};
		\node (3) at (9.1, 0.5) {};
		\draw (0.center) to (3.center);
		\draw [in=180, out=0] (1.center) to (2.center);
		\draw [bend left=90, looseness=1.50] (3.center) to (2.center);
		\draw [bend right=90, looseness=1.50] (0.center) to (1.center);		
	\end{scope}

\end{scope}

\begin{scope}[xshift=0.95cm, yshift=1.72cm]
	\begin{scope}[rotate=90,xshift=0.211cm, yshift=-0.27cm]
		\def\vr{2pt}
		\node  (0) at (0, 0.5) {};
		\node  (1) at (0, 0) {};
		\node  (2) at (9.1, 0) {};
		\node (3) at (9.1, 0.5) {};
		\draw (0.center) to (3.center);
		\draw [in=180, out=0] (1.center) to (2.center);
		\draw [bend left=90, looseness=1.50] (3.center) to (2.center);
		\draw [bend right=90, looseness=1.50] (0.center) to (1.center);		
	\end{scope}
	
		\begin{scope}[rotate=90,xshift=0.211cm, yshift=-3.7cm]
		\def\vr{2pt}
		\node  (0) at (0, 0.5) {};
		\node  (1) at (0, 0) {};
		\node  (2) at (9.1, 0) {};
		\node (3) at (9.1, 0.5) {};
		\draw (0.center) to (3.center);
		\draw [in=180, out=0] (1.center) to (2.center);
		\draw [bend left=90, looseness=1.50] (3.center) to (2.center);
		\draw [bend right=90, looseness=1.50] (0.center) to (1.center);		
	\end{scope}
	
			\begin{scope}[rotate=90,xshift=0.211cm, yshift=-7.2cm]
		\def\vr{2pt}
		\node  (0) at (0, 0.5) {};
		\node  (1) at (0, 0) {};
		\node  (2) at (9.1, 0) {};
		\node (3) at (9.1, 0.5) {};
		\draw (0.center) to (3.center);
		\draw [in=180, out=0] (1.center) to (2.center);
		\draw [bend left=90, looseness=1.50] (3.center) to (2.center);
		\draw [bend right=90, looseness=1.50] (0.center) to (1.center);		
	\end{scope}

\end{scope}

\begin{scope}[name=hedge,xshift=1.94cm, yshift=3.4cm]
	\begin{scope}[rotate=90,xshift=0.211cm, yshift=-0.27cm]
		\def\vr{2pt}
		\node  (0) at (0, 0.5) {};
		\node  (1) at (0, 0) {};
		\node  (2) at (9.1, 0) {};
		\node (3) at (9.1, 0.5) {};

		\draw (0.center) to (3.center);
		\draw [in=180, out=0] (1.center) to (2.center);
		\draw [bend left=90, looseness=1.50] (3.center) to (2.center);
		\draw [bend right=90, looseness=1.50] (0.center) to (1.center);		

	\end{scope}
	
		\begin{scope}[rotate=90,xshift=0.211cm, yshift=-3.7cm]
		\def\vr{2pt}
		\node  (0) at (0, 0.5) {};
		\node  (1) at (0, 0) {};
		\node  (2) at (9.1, 0) {};
		\node (3) at (9.1, 0.5) {};
		\draw (0.center) to (3.center);
		\draw [in=180, out=0] (1.center) to (2.center);
		\draw [bend left=90, looseness=1.50] (3.center) to (2.center);
		\draw [bend right=90, looseness=1.50] (0.center) to (1.center);		
	\end{scope}
	
			\begin{scope}[rotate=90,xshift=0.211cm, yshift=-7.2cm]
		\def\vr{2pt}
		\node  (0) at (0, 0.5) {};
		\node  (1) at (0, 0) {};
		\node  (2) at (9.1, 0) {};
		\node (3) at (9.1, 0.5) {};
		\draw (0.center) to (3.center);
		\draw [in=180, out=0] (1.center) to (2.center);
		\draw [bend left=90, looseness=1.50] (3.center) to (2.center);
		\draw [bend right=90, looseness=1.50] (0.center) to (1.center);		
	\end{scope}
\end{scope}
\end{tikzpicture}

\caption{Cube-hypergraphs}
\label{fig:cubes}
\end{figure}
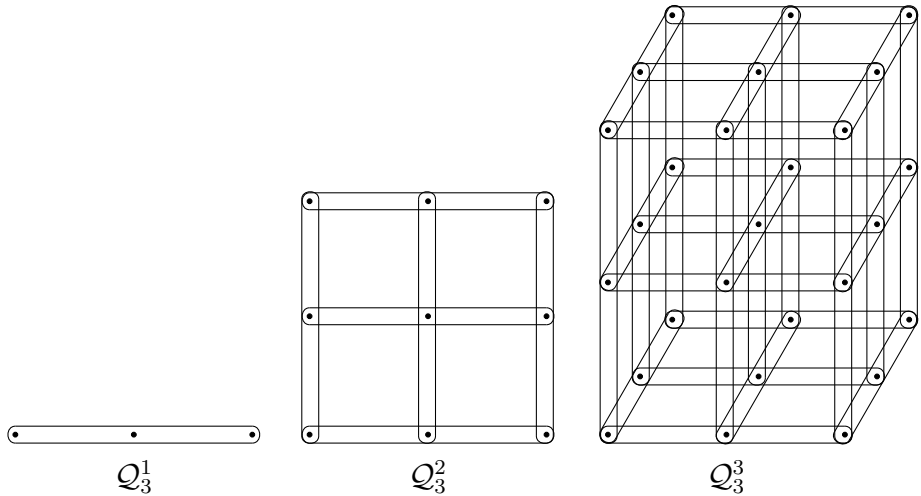

The $k$-uniform $n$-cube $\chg{k}{n}$ can be equivalently described as follows.
Its vertex set is $\{0,1, \ldots, k-1\}^n$ and an edge consists of all $n$-tuples which coincide on $n-1$ coordinates while the remaining coordinate ranges over $\{0, 1, \ldots, k-1\}$. It follows that $|V({\cal Q}_k^{n})| = k^n$ and $|E({\cal Q}_k^{n})| = n \cdot k^{n-1}$. Note that ${\cal Q}_2^{n}$ is a $2$-uniform hypergraph which is as a graph known as the $n$-cube.

\subsection{Partial cube-hypergraphs}

 A $k$-uniform hypergraph $H$ is a {\em partial cube-hypergraph} if $H$ is an isometric subhypergraph of some ${\cal Q}_k^{n}$.
 
A hypergraph $H$ is {\em edge-gated} if for any edge $e = \{a_1, \ldots, a_k\} \in E(H)$ and any vertex $x\in V(H)$ there exists $j \in [k]$ such that $d_H(x, a_i) = d_H(x, a_j) + 1$ for $i \in [k]$, $i\neq j$. We also say that $a_j$ is the {\em gate} of $x$ in $e$.
Note that if $x \in  e$ then $x$ is its own gate in $e$.

It is easy to see that in $2$-uniform hypergraphs (alias graphs) $H$ is edge-gated if and only if $H$ is bipartite. From this reason, edge-gated hypergraphs were named bipartite hypergraphs in \cite{burosch-1995}, where this concept was originally introduced. However, since there are numerous ways how bipartite graphs can be extended to hypergraphs we decided to change the terminology. The present terminology also mimics the established graph terminology, cf.~\cite{colbourn-2008}. 

It is easy to see that if hypergraphs $H$ and $H'$ are both edge-gated then so is $H \cp H'$. Also, if $H'$ is a connected isometric subgraph of an edge-gated hypergraph $H$, then $H'$ is edge-gated as well. It follows that partial cube-hypergraphs and hence in particular $k$-uniform $n$-cubes are edge-gated.

If $x$ and $y$ are two (adjacent) vertices of a hypergraph $H$, then let $H(x,y)$ denote the set of vertices that are closer to $x$ than to $y$, that is, 
$$H(x,y) = \{ z \in V(H) :\ d_H(z, x) < d_H(z, y)\}.$$
Further, if $e=\{a_1, \ldots, a_k\} \in E(H)$, then let $$H(a_i,e) = \{ z \in V(H) :\ d_H(z, a_i) < d_H(z, a_j),\ j\neq i \}. $$
In addition, set 
$$H_e = \{H(a_1,e), \ldots, H(a_k, e)\}.$$ 

Let now $H$ be an edge-gated hypergraph and $e=\{a_1, \ldots, a_k\} \in E(H)$. Since $a_i \in H(a_i, e)$ we have the following important facts.
\begin{lem}
\label{bip_prop}
{\rm \cite[Lemma~1(ii), Lemma~2]{burosch-1995}}
	If $H$ is an edge-gated hypergraph and $e = \{a_1, \ldots, a_k\} \in E(H)$, then the following statements hold.
	\begin{enumerate}[(i)]
		\item $H_e$ is a partition of $V(H)$.
		\item If $e' \in E(H)$, then either $|e' \cap H(a_i, e)| = 1$ for all $i \in [k]$   or $e' \subseteq H(a_i, e)$ for some $i \in [k]$.
	\end{enumerate}
	
\end{lem}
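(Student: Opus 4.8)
The plan is to treat the two parts separately, deriving both from the edge-gated property by pure distance arguments. For part~(i) I would first observe that the gate is in fact \emph{unique}: if $a_j$ and $a_{j'}$ were both gates of $x$ in $e$ with $j \neq j'$, then the defining condition would give $d_H(x,a_{j'}) = d_H(x,a_j)+1$ and simultaneously $d_H(x,a_j) = d_H(x,a_{j'})+1$, which is absurd. Thus every $x \in V(H)$ has a unique gate $a_j$ in $e$, and this says precisely that $x \in H(a_j,e)$ while $x \notin H(a_i,e)$ for $i \neq j$. Hence the sets $H(a_1,e),\ldots,H(a_k,e)$ cover $V(H)$ and are pairwise disjoint; each is nonempty since $a_i \in H(a_i,e)$ (because $d_H(a_i,a_i)=0 < d_H(a_i,a_j)$ for $j \neq i$). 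So $H_e$ is a partition.

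For part~(ii), write $e' = \{b_1,\ldots,b_k\}$ and let $a_{\phi(m)}$ be the unique gate of $b_m$ in $e$, so that the vector $(d_H(b_m,a_1),\ldots,d_H(b_m,a_k))$ equals $d_m$ in coordinate $\phi(m)$ and $d_m+1$ in every other coordinate, where $d_m = d_H(b_m,a_{\phi(m)})$. The whole argument rests on one observation: since any two vertices of $e'$ are adjacent, their distance vectors to $(a_1,\ldots,a_k)$ differ by at most $1$ in each coordinate. I would first use this to show that if $b_m,b_{m'}$ have \emph{distinct} gates $a_i \neq a_{i'}$, then inspecting coordinates $i$ and $i'$ and applying the at-most-$1$ bound twice forces $d_m = d_{m'}$.

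The heart of the proof is the dichotomy. Suppose $e'$ meets at least two parts, witnessed by $b_m \in H(a_i,e)$ and $b_{m'} \in H(a_{i'},e)$ with $i \neq i'$; I want to conclude that $e'$ meets every part in exactly one vertex. First I would propagate the equality above to a common value $\alpha$: any third vertex $b_{m''}$ has a gate distinct from at least one of the two witnesses (if its gate is not $a_i$, compare with $b_m$; if it is $a_i$, then it differs from $b_{m'}$, so compare with $b_{m'}$), whence $d_{m''} = \alpha$ throughout. Consequently, for each fixed $i$, the distance from $a_i$ to a vertex $b_m \in e'$ is $\alpha$ when $\phi(m)=i$ and $\alpha+1$ otherwise. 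Now I apply the edge-gated property to the edge $e'$ and the vertex $a_i$: the gate of $a_i$ in $e'$ is the unique strictly closest vertex. Since the only distances occurring are $\alpha$ and $\alpha+1$ and $k \geq 2$, there must be \emph{exactly one} vertex of $e'$ at distance $\alpha$ from $a_i$: if none existed all distances would equal $\alpha+1$ (no strict minimum), and if two existed there would be a tie at the minimum, each contradicting uniqueness of the gate. Hence every $i \in [k]$ has exactly one $m$ with $\phi(m)=i$, so $\phi$ is a bijection and $e'$ meets each part exactly once. The complementary case, that $e'$ meets only one part, is by definition $e' \subseteq H(a_i,e)$ for some $i$.

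The main obstacle is this middle step: showing that the mere hypothesis ``$e'$ meets at least two parts'' already rigidifies the configuration, forcing a uniform distance $\alpha$ from each $b_m$ to its gate and thereby collapsing the distance profile seen from each $a_i$ to the two-valued pattern $\{\alpha,\alpha+1\}$. Once that structure is in place, invoking the gate property of $e'$ at each $a_i$ in turn is routine and yields the bijection. I expect no difficulty in part~(i) or in the final bookkeeping.
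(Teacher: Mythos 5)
Your proof is correct, but there is no proof in the paper to compare it against: the paper does not prove Lemma~\ref{bip_prop} at all, it imports it verbatim from Burosch and Ceccherini \cite{burosch-1995} (their Lemma~1(ii) and Lemma~2), and the only argument the text supplies is the one-line remark that $a_i \in H(a_i,e)$ --- which is precisely your nonemptiness observation in part~(i). So what you have written is a genuine self-contained derivation of an imported result. Checking it: in part~(i) the uniqueness of the gate does give both disjointness and coverage, so $H_e$ is a partition. In part~(ii) your two-step rigidification is sound --- vertices of $e'$ with distinct gates in $e$ must have equal gate-distances (the two-coordinate comparison), and two witnesses in different parts propagate a common value $\alpha$ to every vertex of $e'$ --- and the finishing move, applying the edge-gated property in the reverse direction (to the edge $e'$ and each vertex $a_i$ of $e$), correctly excludes both zero and two-or-more vertices of $e'$ at distance $\alpha$ from $a_i$, since either case contradicts the gate of $a_i$ in $e'$ being a unique strict minimizer with all other vertices exactly one farther. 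This symmetric use of gatedness, first from $e'$ toward $e$ and then from $e$ toward $e'$, is the essential idea and it works. One cosmetic caveat: by writing $e' = \{b_1, \ldots, b_k\}$ you tacitly assume $H$ is $k$-uniform, which the lemma's hypotheses do not state (the paper only ever applies it in the $k$-uniform setting, so no harm is done); your argument does not actually need this assumption, because the dichotomy you establish forces $|e'| = k$ whenever $e'$ meets two parts, so it would be slightly cleaner to let $|e'|$ be arbitrary and let that equality fall out as a conclusion.
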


We next recall the following, key definition from~\cite{burosch-1995}. If $H$ is a hypergraph, then the binary relation $\Theta$ is defined on $E(H)$ as follows:
$$ e \Theta e' \  \equiv \  \forall A \in H_e :\ e'\cap A \neq \emptyset .$$
Note first that for any edge $e \in E(H)$ we have $e\Theta e$. If $H$ is edge-gated, then $\Theta$ is also symmetric by Lemma~\ref{bip_prop}(ii). Moreover, we recall the following important fact.
\begin{lem}
\label{lemma:22}
	{\rm \cite[Lemma~3]{burosch-1995}}
	If $H$ is an edge-gated hypergraph and for every $e\in E(H)$, every $A \in H_e$ is convex, then $f \Theta f'$ if and only if $H_f = H_{f'} $.

\end{lem}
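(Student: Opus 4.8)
The statement is a biconditional whose two directions have very different character, and I would separate them immediately. The reverse implication is essentially definitional and requires no convexity: assuming $H_f = H_{f'}$, write $f' = \{b_1,\dots,b_k\}$; since $b_i \in H(b_i,f')$ and the blocks of $H_{f'}$ coincide as sets with those of $H_f$, every block $A \in H_f$ contains a vertex of $f'$, so $f' \cap A \neq \emptyset$ for all $A \in H_f$, which is exactly $f\,\Theta\,f'$. Thus the entire content lies in the forward direction, and this is where the hypothesis that every block of every $H_e$ is convex must be spent; reflexivity and symmetry of $\Theta$ (already guaranteed by edge-gatedness via Lemma~\ref{bip_prop}) are not enough to force $H_f = H_{f'}$.

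For the forward direction I would first set up the labelling. Assuming $f\,\Theta\,f'$, by Lemma~\ref{bip_prop}(ii) each edge meets a block $H(a_i,f)$ either in a single vertex or is contained in it; since $f'$ meets all $k$ blocks of $H_f$ and $|f'|=k$, the first alternative holds for every $i$, so $|f' \cap H(a_i,f)| = 1$. Relabel $f' = \{b_1,\dots,b_k\}$ so that $b_i \in H(a_i,f)$. The goal $H_f = H_{f'}$ then amounts to showing that a vertex $z$ has gate $a_i$ in $f$ if and only if it has gate $b_i$ in $f'$; equivalently, to the inclusions $H(a_i,f) \subseteq H(b_i,f')$ for all $i$. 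Since $\{H(a_i,f)\}_i$ and $\{H(b_i,f')\}_i$ are two partitions of $V(H)$ into $k$ nonempty blocks, $k$ simultaneous inclusions force termwise equality by a cardinality comparison, so establishing the inclusions suffices.

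Everything therefore reduces to the metric claim $(\star)$: for $z \in H(a_i,f)$ and every $j \neq i$ one has $d_H(z,b_i) < d_H(z,b_j)$. I would prove $(\star)$ by taking a counterexample $(z,i,j)$ with $d_H(z,b_j)$ minimal. Then $z \neq b_i$ (otherwise $d_H(z,b_j)=1>0$), and I examine the first edge $z \to z^{+}$ of a shortest $z$–$b_j$ path. By Lemma~\ref{bip_prop}(ii) this edge is either contained in $H(a_i,f)$, so $z^{+}$ stays in the block, or it meets every block of $H_f$ exactly once, i.e.\ it is a \emph{crossing} edge. In the non-crossing case $z^{+} \in H(a_i,f)$ with $d_H(z^{+},b_j) = d_H(z,b_j)-1$, so by minimality $(\star)$ holds at $z^{+}$, giving $d_H(z,b_i) \le 1 + d_H(z^{+},b_i) < 1 + d_H(z^{+},b_j) = d_H(z,b_j)$, which contradicts $d_H(z,b_j) \le d_H(z,b_i)$; here convexity of $H(a_i,f)$ is exactly what lets me also retreat toward $b_i$ inside the block. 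This disposes of every case except the one in which \emph{every} shortest $z$–$b_j$ path is forced to cross blocks already on its first step.

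This forced-crossing case is the step I expect to be the main obstacle. To handle it I would use convexity of the \emph{target} block $H(a_j,f)$ together with the fact that a crossing edge $e$ carries one representative of every block: it links $z$ to a vertex $q \in e \cap H(a_j,f)$. The crux is to show that convexity forces a shortest $z$–$b_j$ path to cross exactly once and then remain inside $H(a_j,f)$, so that $d_H(z,b_j) = 1 + d_H(q,b_j)$ with the latter distance realised within $H(a_j,f)$, and to match this against the within-block distance from $z$ to $b_i$ inside $H(a_i,f)$ — in effect, to prove that traversing a single crossing edge changes the relevant distance by exactly one. This ``distance transfer across a crossing edge'' is the technical heart, and it is precisely where convexity of \emph{every} block of \emph{every} $H_e$ becomes indispensable (it prevents shortest paths from re-entering a block and forces the crossing edges to realise the partition faithfully). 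Once $(\star)$ is secured, the inclusions $H(a_i,f)\subseteq H(b_i,f')$, and hence $H_f = H_{f'}$, follow as explained above.
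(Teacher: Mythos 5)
Your reverse direction, the relabelling $b_i \in H(a_i,f)$ obtained from Lemma~\ref{bip_prop}(ii), and the reduction of the goal $H_f = H_{f'}$ to the $k$ simultaneous inclusions $H(a_i,f) \subseteq H(b_i,f')$ are all correct. (For the record, the paper itself offers no proof of this lemma --- it is quoted from Burosch and Ceccherini --- so the comparison here is with what a complete argument requires, not with an in-paper proof.)

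The genuine gap is in your proof of the key claim $(\star)$: it is not finished. You run a minimal-counterexample argument, split on whether the first edge of a shortest $z$--$b_j$ path stays in the block $H(a_i,f)$ or crosses all blocks, handle the first case, and then for the forced-crossing case you only state a program --- ``the crux is to show that convexity forces a shortest $z$--$b_j$ path to cross exactly once and then remain inside $H(a_j,f)$'' --- which you explicitly flag as the technical heart and never carry out. So the forward direction, which you yourself identify as the entire content of the lemma, remains unproven. Moreover, the program is aimed at rebuilding the cut structure of shortest paths (essentially the content of Proposition~\ref{prop:2}, which the paper derives \emph{after} this lemma), which is both circular in spirit and unnecessary.

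The step you are missing closes $(\star)$ in three lines, with no induction and no path tracing: apply edge-gatedness to the edge $f'$ itself. Let $z \in H(a_i,f)$ and let $b_j$ be the gate of $z$ in $f'$. If $j \neq i$, the gate property gives $d_H(z,b_i) = d_H(z,b_j) + 1$; since $b_j$ and $b_i$ are distinct vertices of the common edge $f'$, we have $d_H(b_j,b_i) = 1$, hence $d_H(z,b_j) + d_H(b_j,b_i) = d_H(z,b_i)$, i.e.\ $b_j$ lies on a geodesic between $z$ and $b_i$. Both $z$ and $b_i$ lie in $H(a_i,f)$, which is convex by hypothesis, so $b_j \in H(a_i,f)$ --- contradicting $b_j \in H(a_j,f)$ and the fact that $H_f$ is a partition (Lemma~\ref{bip_prop}(i)). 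Therefore the gate of $z$ in $f'$ is $b_i$, which is exactly $H(a_i,f) \subseteq H(b_i,f')$, and your partition argument then yields $H_f = H_{f'}$.
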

For hypergraphs which fulfil the conditions of Lemma~\ref{lemma:22},  the relation $\Theta$ is an equivalence relation where the transitivity is guaranteed by Lemma~\ref{lemma:22}. Partial cube-hypergraphs which are $k$-uniform can now be characterized as follows.

\begin{thm}
\label{thm:1}
{\rm \cite[Theorem~1]{burosch-1995}} A $k$-uniform hypergraph $H$ is a partial cube-hypergraph if and only if $H$ is edge-gated and for every $e \in E(H)$, every $A \in H_e$ is convex.
\end{thm}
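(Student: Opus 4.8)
The plan is to prove the two implications separately, using the already-established fact that every partial cube-hypergraph is edge-gated, so that for the forward direction only convexity remains to be shown, and building the reverse direction around the equivalence relation $\Theta$ furnished by Lemma~\ref{lemma:22}.

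For the forward implication, suppose $H$ is an isometric subhypergraph of $\chg{k}{n}$. Edge-gatedness is already noted in the text, so I only need convexity of each $A\in H_e$. First I would verify the statement inside the host cube: an edge $e$ of $\chg{k}{n}$ is obtained by letting one coordinate, say the $j$-th, range over $\{0,\dots,k-1\}$, and since distance in $\chg{k}{n}$ is the Hamming distance, a direct computation shows that $(\chg{k}{n})(a_i,e)$ is exactly the slice $\{z : z_j=(a_i)_j\}$, which is convex for the Hamming metric. To pass from the cube to $H$ I would invoke isometry: if $u,v$ lie in such a slice $X$ and $z\in V(H)$ satisfies $d_H(u,z)+d_H(z,v)=d_H(u,v)$, then isometry turns this into the same geodesic identity in $\chg{k}{n}$, whence $z\in X$ by convexity in the cube; thus $X\cap V(H)$ is convex in $H$, and this intersection is precisely the relevant member of $H_e$.

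For the reverse implication, assume $H$ is edge-gated with every $A\in H_e$ convex. By Lemma~\ref{lemma:22} the relation $\Theta$ is an equivalence relation, and each class determines a single partition of $V(H)$ into $k$ blocks; numbering the classes $1,\dots,n$ and the blocks $0,\dots,k-1$, I define a labeling $\ell\colon V(H)\to\{0,\dots,k-1\}^n$ whose $t$-th coordinate records the block of the $t$-th partition containing the vertex. The routine first checks are that an edge of $H$ lying in class $t$ is sent to an edge of $\chg{k}{n}$ in direction $t$ (its $k$ vertices occupy one block each of the $t$-th partition by Lemma~\ref{bip_prop}(i), while for any other class Lemma~\ref{bip_prop}(ii) forces the edge into a single block and hence keeps that coordinate constant) and that $\ell$ is injective. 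The distance statement $d_H(u,v)=|\{t : \ell(u)_t\neq\ell(v)_t\}|$ then splits into two bounds, and the inequality $\ge$ is the easy one: by Lemma~\ref{bip_prop}(ii) a coordinate can change only when one traverses an edge of the corresponding class, so any $u,v$-path must spend at least one edge on each separating coordinate.

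The opposite inequality is where I expect the real work, and where convexity becomes indispensable. The plan is to show that a shortest $u,v$-path is \emph{reduced}, meaning that no coordinate ever returns to a value it has already left. Concretely, for a fixed class $t$ and block $A$, the indices $i$ with $u_i\in A$ form a contiguous interval: any intermediate vertex of a subpath of a geodesic lies on a geodesic between its endpoints and hence in $A$ by convexity, so applying this to every block shows each coordinate visits each of its values in a single run. The remaining, and hardest, point is to rule out a geodesic passing through a \emph{third} value of some coordinate, equivalently one that changes a coordinate twice or moves a coordinate not separating $u$ and $v$. For this I would use that an edge of class $t$ meets every block of the $t$-th partition in exactly one vertex (Lemma~\ref{bip_prop}), employing such an edge to build a shortcut that bypasses an intermediate block and contradicts minimality; iterating removes every extra coordinate change, leaving exactly one change per separating coordinate and none otherwise, which yields $d_H(u,v)\le|\{t : \ell(u)_t\neq\ell(v)_t\}|$ and completes the isometric embedding. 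Verifying that this exchange step genuinely shortens the path rather than merely rerouting it is the delicate point and the main obstacle of the argument.
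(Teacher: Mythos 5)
First, a point of reference: the paper does not prove Theorem~\ref{thm:1} at all --- it quotes it from \cite{burosch-1995} --- so your attempt must be judged against what a complete argument requires rather than against a proof in the text. Your forward direction is sound: since $H$ is isometric in $\chg{k}{n}$, each $H(a_i,e)$ is the trace on $V(H)$ of the Hamming slice $\{z:\ z_j=(a_i)_j\}$ of the host cube, slices are convex for the Hamming metric, and the geodesic identity transfers verbatim, so convexity pulls back to $H$. The architecture of your reverse direction is also the right one: coordinates indexed by $\Theta$-classes (legitimate by Lemma~\ref{lemma:22}), blocks as coordinate values, edges of class $t$ mapping to cube edges in direction $t$ via Lemma~\ref{bip_prop}, and the lower bound $d_H(u,v)\ge|\{t:\ \ell(u)_t\ne\ell(v)_t\}|$ all check out.

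The gap is exactly the one you flag, and it is genuine. Convexity, through your contiguity argument, only shows that along a geodesic a coordinate never \emph{returns} to a value it has left; for $k=2$ (Djokovi\'c's characterization of partial cubes) this finishes the proof because there is no third value, but for $k\ge 3$ it does not exclude a geodesic pushing some coordinate through a third block, which is precisely the content of the upper bound. Your proposed repair --- a shortcut built from the fact that a class-$t$ edge meets every block of the $t$-th partition --- is never constructed, and I see no way to make it strictly shorten the path from that fact alone. The missing ingredient is edge-gatedness, which your upper-bound argument never invokes. Concretely: let $u=u_0, e_1, u_1,\ldots$ be a shortest $u,v$-path, where $e_1=\{a_1,\ldots,a_k\}$ has class $t_1$, $u=a_1$, $u_1=a_2$, and let $a_j$ be the gate of $v$ in $e_1$. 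Since $d_H(v,u_1)=d_H(v,u)-1$, one gets $j=2$ in two lines: $j=1$ would give $d_H(v,a_2)=d_H(v,a_1)+1$, contradicting $d_H(v,a_2)<d_H(v,a_1)$, and $j\notin\{1,2\}$ would give $d_H(v,a_1)=d_H(v,a_j)+1=d_H(v,a_2)$, again a contradiction. Hence $v$ lies in the same block of the $t_1$-th partition as $u_1$ and in a different one from $u$, while $\ell(u)$ and $\ell(u_1)$ agree in every coordinate other than $t_1$ (by your own check that $e_1$ sits inside a single block of every other partition). Writing $S(x,y)=\{t:\ \ell(x)_t\ne\ell(y)_t\}$, this says $S(u,v)=S(u_1,v)\cup\{t_1\}$ with $t_1\notin S(u_1,v)$, and induction on $d_H(u,v)$ immediately yields $d_H(u,v)\le|S(u,v)|$, closing the gap. (The same gate computation, applied to the subgeodesic starting at any later edge of class $t_1$, shows a geodesic can never use two edges of one class; combined with your contiguity remark this is an equivalent way to finish.)
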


\begin{thm}
\label{thm:2}
{\rm \cite[Theorem~2]{burosch-1995}}
A $k$-uniform hypergraph $H$ is a partial cube-hypergraph if and only if $H$ is edge-gated and $\Theta$ is transitive.
\end{thm}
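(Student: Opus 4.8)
The plan is to deduce Theorem~\ref{thm:2} from the convexity characterization in Theorem~\ref{thm:1}. Since Theorem~\ref{thm:1} already states that a $k$-uniform hypergraph is a partial cube-hypergraph precisely when it is edge-gated and every $A \in H_e$ is convex, it suffices to prove that, \emph{for an edge-gated hypergraph $H$}, transitivity of $\Theta$ is equivalent to convexity of every part $A \in H_e$. Granting this equivalence, both implications of Theorem~\ref{thm:2} follow at once: if $H$ is a partial cube-hypergraph then by Theorem~\ref{thm:1} it is edge-gated with convex parts, hence $\Theta$ is transitive; conversely, if $H$ is edge-gated and $\Theta$ is transitive then all parts are convex, so $H$ is a partial cube-hypergraph by Theorem~\ref{thm:1}.

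The easy half of this equivalence is essentially Lemma~\ref{lemma:22}. If $H$ is edge-gated and every $A \in H_e$ is convex, then Lemma~\ref{lemma:22} gives that $f \Theta f'$ holds if and only if $H_f = H_{f'}$. Equality of the partitions $H_f$ is plainly an equivalence relation, and in particular transitive, so $\Theta$ inherits transitivity; this is exactly the remark made after Lemma~\ref{lemma:22}.

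The substantial direction is to show that transitivity of $\Theta$ forces every part to be convex, and I would argue by contradiction. Suppose some $A = H(a_i, e)$ is not convex, so there are $u, v \in A$ and a shortest $u,v$-path $P$ that leaves $A$, i.e. visits some vertex outside $A$. Tracking $P$ against the partition $H_e$, Lemma~\ref{bip_prop}(ii) says that every edge of $P$ either lies inside a single part of $H_e$ or meets each of the $k$ parts exactly once; moreover an edge meets every part exactly once if and only if it is $\Theta$-related to $e$. Consequently the part of $H_e$ containing the current vertex changes only when $P$ traverses an edge $\Theta$-related to $e$. Since $P$ starts and ends in $A$ but leaves it, it must contain a \emph{distinct} pair of such edges $f$ and $f'$, one carrying $P$ out of $A$ and the other carrying it back in. Both satisfy $f \Theta e$ and $f' \Theta e$, so by symmetry (valid since $H$ is edge-gated) and the assumed transitivity we obtain $f \Theta f'$, with $f$ and $f'$ two distinct edges lying on the single shortest path $P$.

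The heart of the argument, and the step I expect to be the main obstacle, is the purely metric claim that \emph{two distinct edges on a common shortest path can never be $\Theta$-related} in an edge-gated hypergraph; together with the previous paragraph this yields the desired contradiction. To prove it I would first use the edge-gated property to locate gates along the geodesic: if $f = f_p$ and $f' = f_q$ with $p < q$ are edges of a shortest path $x = w_0, f_1, \ldots, f_m = y$, then the entry vertex $w_{p-1}$ is the gate of $x$ in $f$, so $x \in H(w_{p-1}, f)$, and symmetrically $w_p$ is the gate of $y$. Assuming $f \Theta f'$, the edge $f'$ would meet the part $H(w_{p-1}, f)$ in some vertex $c$; using that any two vertices of a common edge are at distance $1$, together with the gate distances read off $P$, a short computation pins down $d_H(c, w_{p-1})$ and $d_H(c, w_p)$ and shows they violate the defining inequality of $H(w_{p-1}, f)$, a contradiction. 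The technical care here lies in carrying out this distance bookkeeping cleanly and in handling the degenerate uniform case $k = 2$ separately, where $f'$ has no spare vertex $c$ available; once the claim is secured, Theorem~\ref{thm:2} follows.
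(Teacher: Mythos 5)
The paper offers no proof of Theorem~\ref{thm:2} at all: it is imported verbatim from \cite{burosch-1995}, so there is no internal argument to compare yours against, and your proposal is necessarily a different route. It is, moreover, a correct one. You reduce Theorem~\ref{thm:2} to Theorem~\ref{thm:1} by proving that, for an edge-gated $k$-uniform hypergraph, transitivity of $\Theta$ is equivalent to convexity of every part $A \in H_e$; the direction ``convex parts imply transitivity'' is exactly the paper's remark following Lemma~\ref{lemma:22}, and your converse is sound. A non-convex part $A = H(a_i,e)$ yields $u,v \in A$ and a geodesic through some $z \notin A$ (to get the geodesic you should make explicit the standard step of concatenating shortest $u,z$- and $z,v$-paths and checking the concatenation is a path); that geodesic crosses out of and back into $A$, the two crossing edges $f \neq f'$ satisfy $e\,\Theta\,f$ and $e\,\Theta\,f'$ by Lemma~\ref{bip_prop}(ii) and the definition of $\Theta$, and symmetry plus the assumed transitivity give $f\,\Theta\,f'$, contradicting your key claim that distinct edges of a geodesic are never $\Theta$-related. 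The claim is true, and your sketch completes, with the caveat that the ``distance bookkeeping'' needs edge-gatedness applied to $f'$ itself, not merely the triangle inequality: writing the geodesic as $w_0, f_1, w_1, \ldots, f_m, w_m$ with $f = f_p$, $f' = f_q$, $p < q$, the gate of both $w_{p-1}$ and $w_p$ in $f'$ is $w_{q-1}$, which pins down $d_H(c,w_{p-1})$ and $d_H(c,w_p)$ for every $c \in f'$ and shows each such $c$ is strictly closer to $w_p$ than to $w_{p-1}$; hence $f' \cap H(w_{p-1}, f) = \emptyset$ and $f\,\Theta\,f'$ fails. A slicker finish, which removes the bookkeeping and any special worry about $k=2$: by the gate property both $w_{q-1}$ and $w_q$ lie in $H(w_p, f)$, so $f'$ has two vertices in a single part of $H_f$, and since the $k$ parts are pairwise disjoint by Lemma~\ref{bip_prop}(i), the $k$-element edge $f'$ cannot meet all $k$ of them. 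What your approach buys is an actual proof, inside this paper's toolkit, of a statement the paper only cites; what it costs is reliance on Theorem~\ref{thm:1}, which is likewise only cited, so the argument is self-contained relative to the paper but still rests on \cite{burosch-1995} for the other characterization.
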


\section{Cut method for hypergraphs}
\label{sec:cut}
We now have all the tools needed for the main theorem of this article. But before we can formulate it, we need two additional auxiliary results and the following concepts.

If $H$ is a connected hypergraph, then $F \subseteq E(H)$ is a {\em cut} if the edges from $F$ are pairwise disjoint and $H-F$ consists of at least two components. We further say that the cut $F$ is a {\em convex cut} if the vertex set of each component of $H - F$ is a convex set.

Let $H$ be a $k$-uniform partial cube-hypergraph. Theorems~\ref{thm:1} and~\ref{thm:2} imply that the $\Theta$ relation is an equivalence relation on $E(H)$. We will denote its equivalence classes by $F_1, \ldots, F_m$. In addition, if $e \in E(H)$, then the equivalence class with the representative $e$ will also be denoted by $F_e$, that is, $F_e= \{f \in E(H) \  :\ e \Theta f \}$. 

From Lemma~\ref{lemma:22} we infer that the hypergraph $H - F_e$ consists of components  whose vertex sets are precisely the sets from $H_e$. This yields the following important fact.
\begin{prop}
\label{prop:1}
	Let $H $ be $k$-uniform partial cube-hypergraph and let $e \in E(H)$. Then $H - F_e$ has exactly $k$ components.
\end{prop}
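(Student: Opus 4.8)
The plan is to show that the $k$ blocks $H(a_1,e),\ldots,H(a_k,e)$ making up $H_e$ are precisely the vertex sets of the components of $H-F_e$. First I would verify that these blocks account for exactly $k$ distinct, nonempty pieces. By Lemma~\ref{bip_prop}(i) the set $H_e$ is a partition of $V(H)$, and since $a_i\in H(a_i,e)$ every block is nonempty. The blocks are also pairwise distinct: for $j\neq i$ we have $a_i\notin H(a_j,e)$, as $a_i\in H(a_j,e)$ would require $d_H(a_i,a_j)<d_H(a_i,a_i)=0$, which is impossible. Thus each $a_i$ lies in the block $H(a_i,e)$ and in no other, so $|H_e|=k$.

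Next I would determine which edges survive in $H-F_e$. Directly from the definition of $\Theta$, an edge $e'$ satisfies $e\Theta e'$ precisely when $e'$ meets every block of $H_e$, i.e.\ precisely when $e'\in F_e$. Now apply Lemma~\ref{bip_prop}(ii): for each $e'\in E(H)$ either $|e'\cap H(a_i,e)|=1$ for all $i\in[k]$, in which case $e'$ meets every block and hence $e'\in F_e$; or $e'\subseteq H(a_i,e)$ for a single $i$, in which case (using $k\ge 2$ together with the fact that the blocks are nonempty and disjoint) $e'$ misses some block and hence $e'\notin F_e$. Consequently the edges of $H-F_e$ are exactly those contained in a single block of $H_e$, so no edge of $H-F_e$ joins two different blocks, and each block is a union of components of $H-F_e$.

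It remains to prove that each block $H(a_i,e)$ is itself connected in $H-F_e$, so that it forms a single component; this is the crux of the argument and the only place where the full partial-cube-hypergraph hypothesis is used through Theorem~\ref{thm:1}, which guarantees that every $A\in H_e$ is convex. Fix $w\in H(a_i,e)$ and choose a shortest $w,a_i$-path $w=u_0,e_1,u_1,\ldots,e_s,u_s=a_i$ in $H$. Each $u_j$ lies on a $w,a_i$-geodesic, so $d_H(w,u_j)+d_H(u_j,a_i)=d_H(w,a_i)$, and convexity of $H(a_i,e)$ forces $u_j\in H(a_i,e)$ for every $j$. Then each edge $e_j$ contains the two distinct vertices $u_{j-1},u_j$ of $H(a_i,e)$, which rules out the alternative $|e_j\cap H(a_i,e)|=1$ in Lemma~\ref{bip_prop}(ii); hence $e_j\subseteq H(a_i,e)$ and therefore $e_j\notin F_e$. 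Thus the entire path survives in $H-F_e$ and joins $w$ to $a_i$, and since $w$ was arbitrary, $H(a_i,e)$ is connected in $H-F_e$.

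Combining the three steps, the components of $H-F_e$ are exactly the $k$ blocks of $H_e$, so $H-F_e$ has exactly $k$ components. I expect the connectivity step via convexity to be the main obstacle, since the partition and edge-classification steps are essentially bookkeeping on top of Lemma~\ref{bip_prop}, whereas the convexity argument is what converts the purely set-theoretic partition $H_e$ into an honest decomposition into connected pieces.
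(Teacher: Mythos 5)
Your proof is correct, but it takes a more self-contained route than the paper does. The paper gives no formal proof of Proposition~\ref{prop:1} at all: it derives it in one sentence from Lemma~\ref{lemma:22}, the Burosch--Ceccherini result that $f \Theta f'$ holds exactly when $H_f = H_{f'}$, asserting that this identifies the components of $H - F_e$ with the blocks of $H_e$. You never invoke Lemma~\ref{lemma:22}; instead you reconstruct the same identification from first principles: Lemma~\ref{bip_prop}(i) and the fact that $a_i \in H(a_i,e)$ give exactly $k$ distinct nonempty blocks; Lemma~\ref{bip_prop}(ii) together with the definition of $\Theta$ shows that the edges surviving in $H - F_e$ are precisely those contained in a single block; and the convexity of the blocks guaranteed by Theorem~\ref{thm:1} shows each block is connected in $H - F_e$, since a shortest path from $w$ to $a_i$ stays inside $H(a_i,e)$ and each of its edges, having two distinct vertices in that block, must lie entirely inside it and hence survives the cut. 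The trade-off: the paper's inference is shorter because it leans on the imported machinery of~\cite{burosch-1995}, but it leaves implicit precisely the step you correctly call the crux --- that each block of $H_e$ remains connected after $F_e$ is removed --- whereas your argument makes that step explicit and shows exactly where the partial-cube hypothesis enters (through convexity). In effect, your writeup is the detailed proof that the paper compresses into a single citation.
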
 

We also need the following auxiliary result.

\begin{prop}
\label{prop:2}
	Let $H$ be $k$-uniform partial cube-hypergraph and let $e \in E(H)$. If $u$ and $v$ are vertices from different components of $H - F_e$, then every shortest $u,v$-path contains exactly one edge from $F_e$.
\end{prop}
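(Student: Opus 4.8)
The plan is to fix $e = \{a_1, \ldots, a_k\}$ and, using Proposition~\ref{prop:1}, label the $k$ components of $H - F_e$ so that their vertex sets are $H(a_1,e), \ldots, H(a_k,e)$; say $u \in H(a_i, e)$ and $v \in H(a_j, e)$ with $i \neq j$. First I would record two structural facts about how edges meet these components. If $f \notin F_e$, then $f$ fails to meet some part of $H_e$, so by Lemma~\ref{bip_prop}(ii) it is contained in a single $H(a_m, e)$; if $f \in F_e$, then $f \Theta e$ forces $f$ to meet every part of $H_e$, and Lemma~\ref{bip_prop}(ii) then gives $|f \cap H(a_m, e)| = 1$ for every $m$. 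Thus an edge outside $F_e$ keeps both traversed endpoints in one component, while an edge of $F_e$ has exactly one vertex in each component. The ``at least one'' half is then immediate: along any $u,v$-walk the index of the component containing the current vertex can change only when an $F_e$-edge is traversed, and since $i \neq j$ at least one such edge must occur.

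The key tool for the ``at most one'' half would be a gate computation inside $F_e$-edges. I would show that for every $f = \{c_1, \ldots, c_k\} \in F_e$, with $c_m \in H(a_m, e)$, and every $x \in H(a_m, e)$, the gate of $x$ in $f$ is $c_m$, so that $d_H(x, c_l) = d_H(x, c_m) + 1$ for all $l \neq m$. Since $H$ is edge-gated, $x$ has some gate $c_g$ in $f$, and this means $c_g$ lies on a shortest $x,c_l$-path for each $l \neq g$. If $g \neq m$, then taking $l = m$ shows $c_g$ lies on a shortest $x,c_m$-path; but $x, c_m \in H(a_m, e)$, which is convex by Theorem~\ref{thm:1}, forcing $c_g \in H(a_m, e)$ and contradicting $c_g \in H(a_g, e)$. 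Hence $g = m$.

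For the ``at most one'' half I would take a shortest $u,v$-path $P$ and let $f = \{c_1, \ldots, c_k\}$ (with $c_m \in H(a_m, e)$) be the first $F_e$-edge it traverses; up to that point $P$ stays in $H(a_i, e)$, so $P$ reaches $c_i$ and then crosses $f$ into $c_{m_1}$ for some $m_1 \neq i$. Suppose $m_1 \neq j$. Replacing the step $c_i \to c_{m_1}$ by $c_i \to c_j$ (both are vertices of $f$) and appending a shortest $c_j,v$-path yields a $u,v$-walk whose length, by the gate computation applied to $v \in H(a_j,e)$ and $f$ (which gives $d_H(c_j, v) = d_H(c_{m_1}, v) - 1$), is one less than the length of $P$, contradicting that $P$ is shortest. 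Therefore $m_1 = j$, i.e. the first $F_e$-edge already lands $P$ in $H(a_j, e)$; since $v \in H(a_j, e)$ and this set is convex, the remaining subpath of $P$ stays inside $H(a_j, e)$ and traverses no further $F_e$-edge. Combined with the ``at least one'' observation, $P$ contains exactly one edge of $F_e$. I expect the main obstacle to be precisely this last step, ruling out that a shortest path first crosses into a ``wrong'' intermediate component, which is exactly what the gate computation inside $F_e$-edges is engineered to handle.
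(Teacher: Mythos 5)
Your proof is correct and follows essentially the same route as the paper's: both arguments rest on the facts that each edge of $F_e$ meets every component of $H-F_e$ in exactly one vertex while every other edge lies inside a single component, and that the gate of $v$ in any $F_e$-edge is the vertex of that edge lying in $v$'s component, so a shortest path must cross directly into that component and, by convexity, stay there. The only difference is rigor: the paper simply asserts $d_H(u_i,v)=d_H(v_i,v)+1$ "from the edge-gated property" and concludes, whereas you justify this gate identity via convexity of the parts (Theorem~\ref{thm:1}) and finish with an explicit exchange-plus-convexity argument, filling in details the paper leaves implicit.
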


\begin{proof}
	 By Proposition~\ref{prop:1}, $H - F_e$ contains $k$ components which we denote by $H_1, \ldots H_k$. We may without loss of generality assume that $u \in H_1$ and $v \in H_k$. Furthermore, let $F_e = \{e_1, \ldots, e_{\ell}\}$. By Lemma~\ref{lemma:22}(ii) the vertices $u_i$ and $v_i$ defined as 
	 $$\{u_i\} = V(H_1) \cap e_i \quad \mbox{and} \quad \{v_i\} = V(H_k) \cap e_i$$
	 are well-defined for every $i \in [\ell]$. From the edge-gated property of $H$ it follows that $d_H(u_i, v) = d_H(v_i, v) + 1$. Since every $u,v$-path contains at least one of the vertices $u_i$, every shortest $u,v$-path contains exactly one of the edges $e_i$, $i \in [\ell]$.
\end{proof}

Let $H$ be a $k$-uniform partial cube-hypergraph and  let $F_1, \ldots, F_m$ be its $\Theta$-classes. By Proposition~\ref{prop:1}, $H - F_i$ has $k$ components, we denote them in the sequel by $ H_1(F_i), \ldots, H_k(F_i)$. Set in addition 
\begin{equation}
\label{nji}
	n_j(F_i) = |V(H_j(F_i))|, \ j \in [k], \ i \in [m].
\end{equation}

The cut method for hypergraphs now reads as follows.

\begin{thm}
\label{thm:cut_method}
	If $H$ is a  $k$-uniform partial cube-hypergraph, $F_1, \ldots, F_m$ are its $\Theta$-classes, and integers $n_j(F_i)$ are defined as in~\eqref{nji}, then 
	$$W(H) = \sum_{i=1}^m \sum_{ \{j, j'\} \in  {[k] \choose 2}} n_j(F_i) \cdot n_{j'}(F_i) \,.$$
\end{thm}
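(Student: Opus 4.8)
The plan is to reduce the Wiener index to a count over the $\Theta$-classes by first establishing, for every pair of vertices $u,v$, the distance decomposition
$$ d_H(u,v) = \bigl|\{ i \in [m] :\ u \text{ and } v \text{ lie in different components of } H - F_i \}\bigr|, $$
and then to interchange the order of summation in the definition of $W(H)$.

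To prove the decomposition I would fix a pair $u,v$ together with a shortest $u,v$-path $P$. Since the $\Theta$-classes $F_1,\dots,F_m$ partition $E(H)$, the length $d_H(u,v)$ of $P$ equals the sum over $i$ of the number of edges of $F_i$ that $P$ traverses, so it suffices to show that this per-class count equals $1$ exactly when $u,v$ are separated by $F_i$ and equals $0$ otherwise. The separated case is precisely Proposition~\ref{prop:2}. For the unseparated case, say $u$ and $v$ both lie in the component $H_j(F_i)$, I would invoke convexity: by the remark following Lemma~\ref{lemma:22}, $V(H_j(F_i))$ is one of the sets in $H_e$ for a representative $e$ of $F_i$, and by Theorem~\ref{thm:1} this set is convex. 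Every internal vertex $z$ of $P$ satisfies $d_H(u,z)+d_H(z,v)=d_H(u,v)$, so convexity forces $z \in V(H_j(F_i))$; hence $P$ never leaves this component and uses no edge of the cut $F_i$. Summing the per-class counts over $i$ then yields the displayed formula for $d_H(u,v)$.

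With the decomposition in hand I would substitute it into $W(H) = \sum_{\{u,v\}} d_H(u,v)$ and swap the two summations, obtaining
$$ W(H) = \sum_{i=1}^m \bigl|\{ \{u,v\} :\ u \text{ and } v \text{ lie in different components of } H - F_i \}\bigr|. $$
By Proposition~\ref{prop:1} the hypergraph $H-F_i$ has exactly $k$ components $H_1(F_i),\dots,H_k(F_i)$ of sizes $n_1(F_i),\dots,n_k(F_i)$, and an unordered pair is counted by $F_i$ precisely when its two vertices lie in two distinct components. The number of such pairs is $\sum_{\{j,j'\}\in {[k]\choose 2}} n_j(F_i)\,n_{j'}(F_i)$, and inserting this expression gives the claimed identity.

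I expect the main obstacle to be the distance decomposition, and within it the unseparated case specifically: one must exclude a shortest path that strays out of the component of $H - F_i$ containing both endpoints and then returns to it. This is exactly where the convexity condition built into the notion of a partial cube-hypergraph (Theorem~\ref{thm:1}) is indispensable, since without it a single class could contribute more than one edge and the per-class count could exceed the number of separating classes. Once the decomposition is secured, the remaining steps are routine double counting.
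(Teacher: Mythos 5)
Your proposal is correct and takes essentially the same approach as the paper: both decompose each distance $d_H(u,v)$ into per-$\Theta$-class contributions, using Proposition~\ref{prop:2} to show each separating class contributes exactly one edge to every shortest path, and then double count the separated pairs per class via Proposition~\ref{prop:1}. If anything, you are more explicit than the paper, whose terse proof leaves the unseparated case (that a shortest path between two vertices in the same component of $H-F_i$ uses no edge of $F_i$, which needs the convexity of the components guaranteed by Theorem~\ref{thm:1}) entirely implicit.
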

\begin{proof}
	Since $F_1, \ldots, F_m$ form a partition of $E(H)$, the idea is to consider the contribution of each edge to $W(H)$. Consider arbitrary vertices $u$ and $v$ of $H$ and an arbitrary $u,v$-shortest path $P$. By Proposition~\ref{prop:2}, edges from $P$ pairwise lie in different $\Theta$-classes of $E(H)$. If $e$ is an edge of $P$, then the contribution of $F_e$ to the distance $d_H(u,v)$ is exactly $1$. Consequently, the contribution of $F_e$ to $W(H)$ is exactly 
	$$\sum_{ \{j, j'\} \in  {[k] \choose 2}} n_j(F_e) \cdot n_{j'}(F_e).$$
	Summing over all $\Theta$-classes the result follows. 
		\end{proof}
\section{Some applications}
		
		In this section we give some examples and applications of Theorem~\ref{thm:cut_method}.
		\subsection{Cube-hypergraphs}
Cube-hypergraphs are partial cube-hypergraphs by definition. Hence Theorem~\ref{thm:cut_method} applies to them and leads to the following result.
\begin{prop} 
\label{prop:cube}
If $n \ge 1$ and $k \ge 2$, then
	$$ W({\cal Q}_k^{n}) = n {k \choose 2} k^{2(n-1)}.$$
\end{prop}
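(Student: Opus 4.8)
The plan is to apply Theorem~\ref{thm:cut_method} directly to $H = \chg{k}{n}$, so that the whole task reduces to identifying the $\Theta$-classes of $\chg{k}{n}$ and computing the component sizes $n_j(F_i)$. I would work throughout with the coordinate description recalled above: the vertex set is $\{0,1,\ldots,k-1\}^n$, and each edge is obtained by fixing $n-1$ coordinates and letting one distinguished coordinate $\ell \in [n]$ range over all of $\{0,\ldots,k-1\}$. Since $\chg{k}{n}$ is a partial cube-hypergraph by definition, $\Theta$ is an equivalence relation and all the machinery of Section~\ref{sec:cut} applies.

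First I would record the distance formula. Because the single hyperedge of ${\cal Q}_k$ contains all $k$ of its vertices, any two distinct vertices of ${\cal Q}_k$ lie at distance $1$; by the behaviour of distance under the Cartesian product it then follows that $d_{\chg{k}{n}}(u,v)$ equals the number of coordinates in which $u$ and $v$ differ. This Hamming-type formula is the only analytic input needed.

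The key step is to determine the $\Theta$-classes. Fix an edge $e$ with distinguished coordinate $\ell$, and write its vertices $a_1,\ldots,a_k$ so that they agree off coordinate $\ell$ and take the values $0,\ldots,k-1$ in coordinate $\ell$. Using the distance formula, a vertex $z$ satisfies $d_H(z,a_i) < d_H(z,a_j)$ for all $j \neq i$ exactly when the $\ell$-th coordinate of $z$ equals that of $a_i$, since the contributions of the other coordinates are identical for every $a_j$. Hence $H(a_i,e)$ is the ``slice'' consisting of all vertices whose $\ell$-th coordinate is fixed to the value of $a_i$, and so $H_e$ is precisely the partition of $V(H)$ according to the $\ell$-th coordinate. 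This partition depends only on $\ell$ and not on the values fixed on the remaining coordinates. By Lemma~\ref{lemma:22} we have $e \Theta e'$ if and only if $H_e = H_{e'}$, so two edges are $\Theta$-equivalent exactly when they share the same distinguished coordinate. Therefore there are exactly $m = n$ classes $F_1,\ldots,F_n$, where $F_\ell$ collects all $k^{n-1}$ edges with distinguished coordinate $\ell$. I expect this identification of the classes with coordinate directions to be the main obstacle; once it is in place the rest is bookkeeping.

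Finally, the component sizes follow at once. By Proposition~\ref{prop:1} each $H - F_\ell$ has $k$ components, and from the previous step these are the slices $\{z : z_\ell = j\}$ for $j \in \{0,\ldots,k-1\}$, each of cardinality $k^{n-1}$; thus $n_j(F_\ell) = k^{n-1}$ for all $j \in [k]$ and $\ell \in [n]$. Substituting into Theorem~\ref{thm:cut_method}, the inner sum over $\{j,j'\} \in \binom{[k]}{2}$ consists of $\binom{k}{2}$ terms, each equal to $k^{n-1}\cdot k^{n-1} = k^{2(n-1)}$, giving $\binom{k}{2}k^{2(n-1)}$ per class; summing over the $n$ classes yields $W(\chg{k}{n}) = n\binom{k}{2}k^{2(n-1)}$, as claimed.
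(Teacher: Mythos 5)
Your proposal is correct and follows essentially the same route as the paper: both apply Theorem~\ref{thm:cut_method} after identifying the $\Theta$-classes of $\chg{k}{n}$ with the $n$ coordinate directions and observing that each of the $k$ components of $\chg{k}{n} - F_i$ has $k^{n-1}$ vertices. Your explicit use of the Hamming distance formula and Lemma~\ref{lemma:22} just spells out details the paper leaves implicit; the argument is the same.
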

\begin{proof}
	To apply Theorem~\ref{thm:cut_method}, we first determine the $\Theta$-classes of ${\cal Q}_k^{n}$. Let an edge $e \in E(\chg{k}{n})$ be of the form $\{a_i = (i,0,\ldots,0) \ : \ i \in \{0,1, \ldots, k-1\}\}$. Then $H(e, a_i)$ contains the vertices $(i, v_2, \ldots, v_n)$, where $(v_2, \ldots, v_n) \in \{0,1,\ldots, k-1\}^{n-1}$. By Theorem~\ref{thm:1}, sets $H(e, a_i)$ are convex and the subhypergraphs induced by them are isomorphic to $\chg{k}{n-1}$. Then the $\Theta$-class $F_e = F_1$ contains all the edges whose last $n-1$ coordinates are fixed and the first coordinate ranges from $0$ to $k-1$. Using the same reasoning we get that every $\Theta$-class is of the above form. Therefore $\chg{k}{n}$ has $\Theta$-classes $F_1, \ldots, F_n$ where $\chg{k}{n} - F_i$ has components which are isomorphic to $\chg{k}{n-1}$ for $i \in [n]$. It then follows that $n_j(F_i) = k^{n-1}$ for every $j \in [k]$ and $i \in [n]$. From Theorem~\ref{thm:cut_method} it follows that
	\begin{align*}
		W(\chg{k}{n}) &= \sum_{i=1}^n \sum_{ \{j, j'\} \in  {[k] \choose 2}} k^{n-1}\cdot k^{n-1} = n \binom{k}{2}k^{2(n-1)},
	\end{align*}
	which we wanted to show.
	 
\end{proof}
Setting $k=2$, the hypergraph $\chg{2}{n}$ is the $n$-cube graph $Q_n$ and Proposition~\ref{prop:cube} implies a well-known result $W(Q_n) = n4^{n-1}$, which can in particular be deduced from the formula for the Wiener index of Cartesian products~\cite{graovac-1991}.

		\subsection{Hypertrees}
		\label{sec:trees}
		
		A hypergraph $T$ is a {\em hypertree} if it is connected, linear, and has no cycles. Here a {\em cycle} in a hypergraph is defined just as we defined a path except that the first and the last vertex from the corresponding sequence coincide. A hypertree which is linear and $k$-uniform is a partial cube-hypergraph where every edge $e$ is it own $\Theta$-class. Hence Theorem~\ref{thm:cut_method} as a special case yields the following result.
		
		\begin{cor}
		\label{cor:tree}
			If $T$ is a $k$-uniform hypertree, then
			$$W(T) = \sum_{e \in E(T)} \sum_{\{j, j'\} \in \binom{[k]}{2}} n_j(e) \cdot n_{j'}(e),  $$
			where $n_j(e) = n_j(F_e)$.
		\end{cor}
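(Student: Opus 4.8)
The plan is to obtain this corollary as the degenerate case of Theorem~\ref{thm:cut_method} in which every $\Theta$-class is a singleton. The work therefore splits into two verifications — that a $k$-uniform hypertree $T$ is a partial cube-hypergraph, and that $F_e = \{e\}$ for every $e \in E(T)$ — after which the formula of Theorem~\ref{thm:cut_method} collapses to the claimed single sum over the edges.

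First I would check the hypotheses of Theorem~\ref{thm:1}. To see that $T$ is edge-gated, fix an edge $e = \{a_1, \ldots, a_k\}$ and a vertex $x$. Since $T$ is connected and acyclic, a shortest path from $x$ reaches $e$ through a single vertex $a_j$, the gate, and every other $a_i$ is then one further step across $e$, so $d_T(x, a_i) = d_T(x, a_j) + 1$ for $i \neq j$; were two vertices of $e$ equidistant from $x$, their two shortest paths together with the edge $e$ would close a cycle, which is impossible. For convexity of the members of $T_e$, observe that $T - e$ splits into $k$ branches, one containing each $a_i$, and that $H(a_i, e)$ is exactly the vertex set of the branch of $a_i$; a geodesic between two vertices of this branch cannot leave it and return without reusing $e$ and creating a cycle, so each $H(a_i, e)$ is convex. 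By Theorem~\ref{thm:1} this makes $T$ a partial cube-hypergraph, whence (via Theorems~\ref{thm:1} and~\ref{thm:2}) $\Theta$ is an equivalence relation and Proposition~\ref{prop:1} applies.

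Next I would show that each $\Theta$-class is a singleton. Suppose $e \Theta e'$ with $e' \neq e$. By the definition of $\Theta$ the edge $e'$ meets every part of $H_e$, so Lemma~\ref{bip_prop}(ii) forces the first alternative, namely $|e' \cap H(a_i, e)| = 1$ for all $i \in [k]$; thus $e'$ has exactly one vertex in each branch of $T - e$. Choosing vertices $b, b' \in e'$ in the branches of two distinct vertices $a, a'$ of $e$ yields two $b,b'$-walks meeting only at their endpoints: the single edge $e'$ on one side, and on the other the walk that runs through the branch to $a$, crosses $e$ to $a'$, and continues through the branch to $b'$. These two walks close a cycle with distinct vertices and distinct edges, contradicting that $T$ is a hypertree. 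Hence $F_e = \{e\}$ for every edge.

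With these facts the conclusion is immediate: the $\Theta$-classes are precisely the singletons $\{e\}$, $e \in E(T)$, so the outer index $i$ in Theorem~\ref{thm:cut_method} runs over the edges and $n_j(F_i)$ becomes $n_j(e) = n_j(F_e)$, yielding the stated formula after a purely notational rewrite. The only step calling for genuine care is the singleton argument: the subtlety is to extract an honest hypergraph cycle — distinct vertices and distinct edges in the sense of the path definition — from the two $b,b'$-walks, which is exactly where the acyclicity and linearity of $T$ enter.
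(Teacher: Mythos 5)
Your proof is correct and takes essentially the same route as the paper: establish that a $k$-uniform hypertree is a partial cube-hypergraph whose $\Theta$-classes are all singletons, and then read off the formula as the special case of Theorem~\ref{thm:cut_method}. The paper simply asserts these two facts in one sentence without argument, so your verifications of edge-gatedness, convexity of the branches, and the singleton property (via the cycle extracted from $e$, $e'$ and the two branch segments) merely supply details the paper omits.
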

		
		Actually Corollary~\ref{cor:tree} holds also if we do not require that a hypertree is uniform. For this sake one just needs to reformulate Proposition~\ref{prop:1} such that its conclusion asserts that for any edge $e \in E(T)$, the hypergraph $T - e$ has exactly $|e|$ components. Moreover the second key auxiliary result, Proposition~\ref{prop:2}, also holds by the fact that in a hypertree there is a unique shortest path between two vertices. In this way Corollary~\ref{cor:tree} extends to
		
		\begin{thm}
		\label{thm:cut_method_tree}
		{\rm \cite[Theorem~3]{rodriguez-2005}}
			If $T$ is a hypertree, then
			$$W(T) = \sum_{e \in E(T)} \sum_{\{j, j'\} \in \binom{[|e|]}{2}} n_j(e) \cdot n_{j'}(e). $$
			
		\end{thm}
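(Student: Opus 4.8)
The plan is to imitate the proof of Theorem~\ref{thm:cut_method}, again computing $W(T)$ by summing the contribution of each edge, but now replacing the $\Theta$-classes by individual edges and replacing the edge-gated arguments by the unique-path property of trees. So I would write each distance as a number of edges, interchange the summations to group the contributions edge by edge, and then evaluate each such contribution combinatorially.

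First I would reestablish the two auxiliary facts. Because $T$ is connected and acyclic, any two vertices are joined by a unique path (two distinct $u,v$-paths would, after trimming a shared initial portion, produce a cycle), and this unique path is necessarily a shortest path. Next I would prove the analogue of Proposition~\ref{prop:1}: for an edge $e=\{a_1,\dots,a_s\}$ with $s=|e|$, the hypergraph $T-e$ has exactly $s$ components, one containing each $a_j$. No two of the $a_j$ can lie in a common component of $T-e$, since a path between them avoiding $e$ together with $e$ itself would be a cycle; and no component of $T-e$ can avoid all of the $a_j$, since such a component would be incident in $T$ to no edge other than $e$, which it does not meet, contradicting connectedness. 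Writing $n_j(e)$ for the number of vertices in the component of $T-e$ containing $a_j$, the analogue of Proposition~\ref{prop:2} is then immediate: the unique $u,v$-path passes through $e$ exactly when $u$ and $v$ fall in different components of $T-e$, in which case it uses $e$ once.

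Given these facts, I would expand $W(T)=\sum_{\{u,v\}} d_T(u,v)$ by reading each $d_T(u,v)$ as the number of edges on the unique $u,v$-path and interchanging the order of summation:
$$
W(T)=\sum_{e \in E(T)} \left| \left\{ \{u,v\}\in \binom{V(T)}{2} :\ e \text{ lies on the } u,v\text{-path} \right\} \right|.
$$
For fixed $e$, the pairs being counted are exactly those with endpoints in distinct components of $T-e$; taking one endpoint in the component of $a_j$ and the other in that of $a_{j'}$ shows this count is $\sum_{\{j,j'\}\in\binom{[|e|]}{2}} n_j(e)\,n_{j'}(e)$. Summing over $e\in E(T)$ yields the stated identity.

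The step I expect to be the main obstacle is the clean proof of the reformulated Proposition~\ref{prop:1}, namely that deleting one edge breaks the hypertree into precisely $|e|$ pieces with each $a_j$ isolated in its own piece; this is exactly where acyclicity (to keep the $a_j$ apart) and connectedness (to force every piece to meet $e$) are both genuinely used. Once that is settled, the remainder is the same bookkeeping as in Theorem~\ref{thm:cut_method}, with the single new feature that the inner index set $\binom{[|e|]}{2}$ now depends on $e$, reflecting that the hypertree need not be uniform.
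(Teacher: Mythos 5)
Your proposal is correct and takes essentially the same route as the paper: the paper's own argument for Theorem~\ref{thm:cut_method_tree} is precisely the sketch you execute, namely reformulating Proposition~\ref{prop:1} so that $T-e$ has exactly $|e|$ components, noting that Proposition~\ref{prop:2} survives because paths in a hypertree are unique, and then repeating the edge-contribution bookkeeping from the proof of Theorem~\ref{thm:cut_method}. If anything, you supply more detail than the paper, which asserts the component count and the uniqueness of (shortest) paths without proof.
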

		
		For an example consider a hypertree $T_1$ from Figure~\ref{ex:tree1}. The hypertree $T_1$ has seven vertices and four edges. We now apply Theorem~\ref{thm:cut_method_tree}. For instance consider the edge $e = \{a_1,a_2,a_3\}$ as shown in the figure. Then $n_1(e) = 2$, $n_2(e) = 1$ and $n_3(e) = 4$. Therefore the contribution of $e$ to the formula of Theorem~\ref{thm:cut_method_tree} is $2\cdot 1 + 1\cdot 4 + 2\cdot 4$. Doing similar computations for the other three edges (see the bottom line of Fig.~\ref{ex:tree1}) we get
		$$ W(T_1) = 1\cdot 6 + (2\cdot 1 + 1\cdot 4 + 2\cdot 4) + (5 \cdot 1 + 5 \cdot  1+ 1 \cdot 1) + 6 \cdot 1 =37. $$

		\begin{figure}[ht!]
		\centering
		
			\begin{tikzpicture}[scale=0.45]
	\tikzstyle{nodee}=[fill=black, draw=black, shape=circle, scale=0.4]
	\begin{scope}
			\node [style=nodee] (0) at (-2.25, 0) {};
		\node [style=nodee] (1) at (-2.25, 1.75) {};
		\node [style=nodee] (2) at (-0.75, 0) {};
		\node [style=nodee] (3) at (0.75, 0) {};
		\node [style=nodee] (4) at (0.75, 1.5) {};
		\node [style=nodee] (5) at (0.75, 3) {};
		\node [style=nodee] (6) at (2.25, 0) {};
		\node [] (7) at (-2.75, 2) {};
		\node [] (8) at (-2.5, -0.5) {};
		\node [] (9) at (1, -0.5) {};
		\node [] (10) at (1, 0.5) {};
		\node [] (11) at (-1.75, 2) {};
		\node [] (12) at (-2.75, -0.25) {};
		\node [] (13) at (-1.75, -0.25) {};
		\node [] (14) at (-2.5, 0.5) {};
		\node [] (15) at (0.25, 3.25) {};
		\node [] (16) at (1.25, 3.25) {};
		\node [] (17) at (0.25, -0.25) {};
		\node [] (18) at (1.25, -0.25) {};
		\node [] (19) at (0.5, -0.5) {};
		\node [] (20) at (2.5, -0.5) {};
		\node [] (21) at (2.5, 0.5) {};
		\node [] (22) at (0.5, 0.5) {};

		\draw [, bend right, looseness=0.50] (7.center) to (12.center);
		\draw [, bend right=60, looseness=1.25] (12.center) to (13.center);
		\draw [, bend left, looseness=0.50] (11.center) to (13.center);
		\draw [, in=120, out=60, looseness=1.25] (7.center) to (11.center);
		\draw [, bend left=45, looseness=0.25] (14.center) to (10.center);
		\draw [, bend right, looseness=0.25] (8.center) to (9.center);
		\draw [, bend left=75, looseness=1.25] (10.center) to (9.center);
		\draw [, bend right=75, looseness=1.50] (14.center) to (8.center);
		\draw [, bend right, looseness=0.50] (15.center) to (17.center);
		\draw [, bend right=75, looseness=1.25] (17.center) to (18.center);
		\draw [, in=60, out=-60, looseness=0.50] (16.center) to (18.center);
		\draw [, bend left=75, looseness=1.50] (15.center) to (16.center);
		\draw [, in=165, out=15, looseness=0.75] (22.center) to (21.center);
		\draw [, bend right=15, looseness=0.75] (19.center) to (20.center);
		\draw [, bend left=75, looseness=1.50] (21.center) to (20.center);
		\draw [, bend right=75, looseness=1.25] (22.center) to (19.center);
		\node []  at (-0.7, -1.2) {$e$};

	\end{scope}
	
	\begin{scope}	[xshift=-12cm, yshift=-6cm]	
	\node [style=nodee] (0) at (-2.25, 0) {};
		\node [style=nodee] (1) at (-2.25, 1.75) {};
		\node [style=nodee] (2) at (-0.75, 0) {};
		\node [style=nodee] (3) at (0.75, 0) {};
		\node [style=nodee] (4) at (0.75, 1.5) {};
		\node [style=nodee] (5) at (0.75, 3) {};
		\node [style=nodee] (6) at (2.25, 0) {};
		\node [] (7) at (-2.75, 2) {};
		\node [] (8) at (-2.5, -0.5) {};
		\node [] (9) at (1, -0.5) {};
		\node [] (10) at (1, 0.5) {};
		\node [] (11) at (-1.75, 2) {};
		\node [] (12) at (-2.75, -0.25) {};
		\node [] (13) at (-1.75, -0.25) {};
		\node [] (14) at (-2.5, 0.5) {};
		\node [] (15) at (0.25, 3.25) {};
		\node [] (16) at (1.25, 3.25) {};
		\node [] (17) at (0.25, -0.25) {};
		\node [] (18) at (1.25, -0.25) {};
		\node [] (19) at (0.5, -0.5) {};
		\node [] (20) at (2.5, -0.5) {};
		\node [] (21) at (2.5, 0.5) {};
		\node [] (22) at (0.5, 0.5) {};

		\draw [, bend left=45, looseness=0.25] (14.center) to (10.center);
		\draw [, bend right, looseness=0.25] (8.center) to (9.center);
		\draw [, bend left=75, looseness=1.25] (10.center) to (9.center);
		\draw [, bend right=75, looseness=1.50] (14.center) to (8.center);
		\draw [, bend right, looseness=0.50] (15.center) to (17.center);
		\draw [, bend right=75, looseness=1.25] (17.center) to (18.center);
		\draw [, in=60, out=-60, looseness=0.50] (16.center) to (18.center);
		\draw [, bend left=75, looseness=1.50] (15.center) to (16.center);
		\draw [, in=165, out=15, looseness=0.75] (22.center) to (21.center);
		\draw [, bend right=15, looseness=0.75] (19.center) to (20.center);
		\draw [, bend left=75, looseness=1.50] (21.center) to (20.center);
		\draw [, bend right=75, looseness=1.25] (22.center) to (19.center);

	\end{scope}
		\begin{scope}	[xshift=-4cm, yshift=-6cm]	
	\node [style=nodee] (0) at (-2.25, 0) {};
		\node [style=nodee] (1) at (-2.25, 1.75) {};
		\node [style=nodee] (2) at (-0.75, 0) {};
		\node [style=nodee] (3) at (0.75, 0) {};
		\node [style=nodee] (4) at (0.75, 1.5) {};
		\node [style=nodee] (5) at (0.75, 3) {};
		\node [style=nodee] (6) at (2.25, 0) {};
		\node [] (7) at (-2.75, 2) {};
		\node [] (8) at (-2.5, -0.5) {};
		\node [] (9) at (1, -0.5) {};
		\node [] (10) at (1, 0.5) {};
		\node [] (11) at (-1.75, 2) {};
		\node [] (12) at (-2.75, -0.25) {};
		\node [] (13) at (-1.75, -0.25) {};
		\node [] (14) at (-2.5, 0.5) {};
		\node [] (15) at (0.25, 3.25) {};
		\node [] (16) at (1.25, 3.25) {};
		\node [] (17) at (0.25, -0.25) {};
		\node [] (18) at (1.25, -0.25) {};
		\node [] (19) at (0.5, -0.5) {};
		\node [] (20) at (2.5, -0.5) {};
		\node [] (21) at (2.5, 0.5) {};
		\node [] (22) at (0.5, 0.5) {};

		\draw [, bend right, looseness=0.50] (7.center) to (12.center);
		\draw [, bend right=60, looseness=1.25] (12.center) to (13.center);
		\draw [, bend left, looseness=0.50] (11.center) to (13.center);
		\draw [, in=120, out=60, looseness=1.25] (7.center) to (11.center);
		\draw [, bend right, looseness=0.50] (15.center) to (17.center);
		\draw [, bend right=75, looseness=1.25] (17.center) to (18.center);
		\draw [, in=60, out=-60, looseness=0.50] (16.center) to (18.center);
		\draw [, bend left=75, looseness=1.50] (15.center) to (16.center);
		\draw [, in=165, out=15, looseness=0.75] (22.center) to (21.center);
		\draw [, bend right=15, looseness=0.75] (19.center) to (20.center);
		\draw [, bend left=75, looseness=1.50] (21.center) to (20.center);
		\draw [, bend right=75, looseness=1.25] (22.center) to (19.center);
		
		 \node []  at (-2.2, -1.2) {$a_1$};
		 \node []  at (-0.7, -1.2) {$a_2$};

 		\node []  at (0.7, -1.2) {$a_3$};

	\end{scope}
	
		\begin{scope}	[xshift=4cm, yshift=-6cm]	
	\node [style=nodee] (0) at (-2.25, 0) {};
		\node [style=nodee] (1) at (-2.25, 1.75) {};
		\node [style=nodee] (2) at (-0.75, 0) {};
		\node [style=nodee] (3) at (0.75, 0) {};
		\node [style=nodee] (4) at (0.75, 1.5) {};
		\node [style=nodee] (5) at (0.75, 3) {};
		\node [style=nodee] (6) at (2.25, 0) {};
		\node [] (7) at (-2.75, 2) {};
		\node [] (8) at (-2.5, -0.5) {};
		\node [] (9) at (1, -0.5) {};
		\node [] (10) at (1, 0.5) {};
		\node [] (11) at (-1.75, 2) {};
		\node [] (12) at (-2.75, -0.25) {};
		\node [] (13) at (-1.75, -0.25) {};
		\node [] (14) at (-2.5, 0.5) {};
		\node [] (15) at (0.25, 3.25) {};
		\node [] (16) at (1.25, 3.25) {};
		\node [] (17) at (0.25, -0.25) {};
		\node [] (18) at (1.25, -0.25) {};
		\node [] (19) at (0.5, -0.5) {};
		\node [] (20) at (2.5, -0.5) {};
		\node [] (21) at (2.5, 0.5) {};
		\node [] (22) at (0.5, 0.5) {};

		\draw [, bend right, looseness=0.50] (7.center) to (12.center);
		\draw [, bend right=60, looseness=1.25] (12.center) to (13.center);
		\draw [, bend left, looseness=0.50] (11.center) to (13.center);
		\draw [, in=120, out=60, looseness=1.25] (7.center) to (11.center);
		\draw [, bend left=45, looseness=0.25] (14.center) to (10.center);
		\draw [, bend right, looseness=0.25] (8.center) to (9.center);
		\draw [, bend left=75, looseness=1.25] (10.center) to (9.center);
		\draw [, bend right=75, looseness=1.50] (14.center) to (8.center);
		\draw [, in=165, out=15, looseness=0.75] (22.center) to (21.center);
		\draw [, bend right=15, looseness=0.75] (19.center) to (20.center);
		\draw [, bend left=75, looseness=1.50] (21.center) to (20.center);
		\draw [, bend right=75, looseness=1.25] (22.center) to (19.center);

	\end{scope}
	
		\begin{scope}	[xshift=12cm, yshift=-6cm]	
	\node [style=nodee] (0) at (-2.25, 0) {};
		\node [style=nodee] (1) at (-2.25, 1.75) {};
		\node [style=nodee] (2) at (-0.75, 0) {};
		\node [style=nodee] (3) at (0.75, 0) {};
		\node [style=nodee] (4) at (0.75, 1.5) {};
		\node [style=nodee] (5) at (0.75, 3) {};
		\node [style=nodee] (6) at (2.25, 0) {};
		\node [] (7) at (-2.75, 2) {};
		\node [] (8) at (-2.5, -0.5) {};
		\node [] (9) at (1, -0.5) {};
		\node [] (10) at (1, 0.5) {};
		\node [] (11) at (-1.75, 2) {};
		\node [] (12) at (-2.75, -0.25) {};
		\node [] (13) at (-1.75, -0.25) {};
		\node [] (14) at (-2.5, 0.5) {};
		\node [] (15) at (0.25, 3.25) {};
		\node [] (16) at (1.25, 3.25) {};
		\node [] (17) at (0.25, -0.25) {};
		\node [] (18) at (1.25, -0.25) {};
		\node [] (19) at (0.5, -0.5) {};
		\node [] (20) at (2.5, -0.5) {};
		\node [] (21) at (2.5, 0.5) {};
		\node [] (22) at (0.5, 0.5) {};

		\draw [, bend right, looseness=0.50] (7.center) to (12.center);
		\draw [, bend right=60, looseness=1.25] (12.center) to (13.center);
		\draw [, bend left, looseness=0.50] (11.center) to (13.center);
		\draw [, in=120, out=60, looseness=1.25] (7.center) to (11.center);
		\draw [, bend left=45, looseness=0.25] (14.center) to (10.center);
		\draw [, bend right, looseness=0.25] (8.center) to (9.center);
		\draw [, bend left=75, looseness=1.25] (10.center) to (9.center);
		\draw [, bend right=75, looseness=1.50] (14.center) to (8.center);
		\draw [, bend right, looseness=0.50] (15.center) to (17.center);
		\draw [, bend right=75, looseness=1.25] (17.center) to (18.center);
		\draw [, in=60, out=-60, looseness=0.50] (16.center) to (18.center);
		\draw [, bend left=75, looseness=1.50] (15.center) to (16.center);

	\end{scope}

\end{tikzpicture}

\caption{Hypertree $T_1$.}
\label{ex:tree1}
\end{figure}
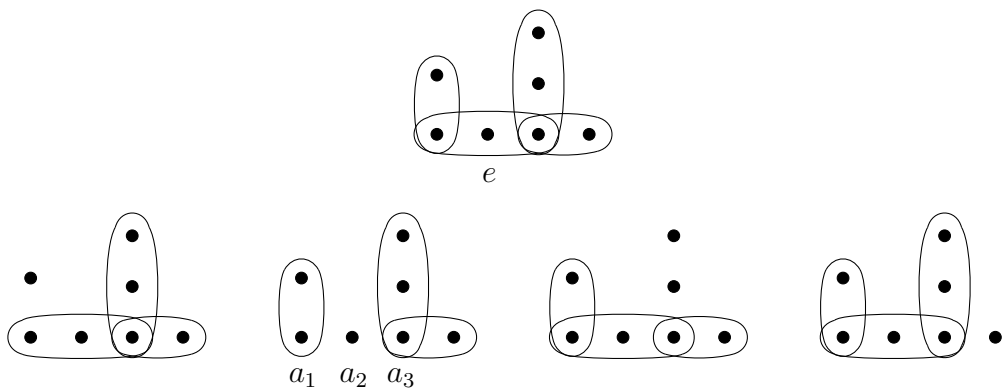

		A limitation of Theorem~\ref{thm:cut_method_tree} is that it only works for linear hypertrees. On the other hand, there exist many different definitions of {\em acyclicity} in hypergraphs, where some of them also allow for non-linear hypergraphs. See for example~\cite{jegou-2009}. We next show with an example that the main idea of Theorem~\ref{thm:cut_method_tree} can sometimes be generalized to such cases as well. 
		
		Define the {\em linear phenylene} hypergraphs $LP_n$, $n
		\ge 2$, as follows. (For some recent studies of phenylenes in mathematical chemistry see~\cite{chen-2019, knor-2023, li-2020a, liu-2022}.) $LP_n$ has vertex set $[6n]$. It has $2n - 1$ hyperedges. The first $n$ of them are of the form $\{6i + 1, 6i + 2, \ldots, 6i + 6\}$ where $i \in \{0,1, \ldots, n-1\}$, and the remaining $n-1$ hyperedges edges are of the form $\{6i + 5, 6i + 6, 6i + 7, 6i + 8\}$, where $i \in \{0,1,\ldots, n-2\}$. In Figure~\ref{ex:phenylene} the hypergraph $LP_4$ is drawn.
		
		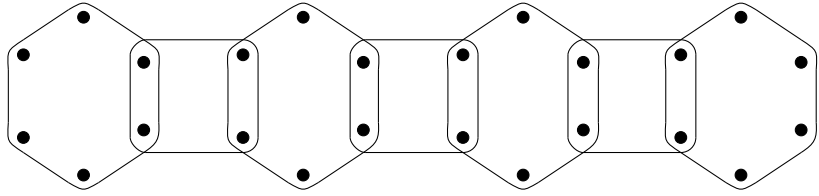
\begin{figure}[ht!]
		\centering
			\begin{tikzpicture}[scale=0.4]
			\tikzstyle{nodee}=[fill=black, draw=black, shape=circle, scale=0.4]
			
			\begin{scope}[rotate=90]

		\node  (0) at (-0.75, 0) {};
		\node  (1) at (-1.75, 0.5) {};
		\node  (2) at (1, 0) {};
		\node  (3) at (2, 0.5) {};
		\node  (4) at (3, 2) {};
		\node  (5) at (3, 3) {};
		\node  (6) at (-2.75, 2) {};
		\node  (7) at (-2.75, 3) {};
		\node  (8) at (-1.75, 4.5) {};
		\node  (9) at (-0.75, 5) {};
		\node (10) at (1, 5) {};
		\node  (11) at (2, 4.5) {};
		\node [style=nodee] (12) at (-1, 0.5) {};
		\node [style=nodee] (13) at (1.25, 0.5) {};
		\node [style=nodee] (14) at (2.75, 2.5) {};
		\node [style=nodee] (15) at (1.5, 4.5) {};
		\node [style=nodee] (16) at (-1.25, 4.5) {};
		\node [style=nodee] (17) at (-2.5, 2.5) {};

		\draw (0.center) to (2.center);
		\draw [bend right, looseness=1.50] (2.center) to (3.center);
		\draw [bend left, looseness=1.25] (0.center) to (1.center);
		\draw (6.center) to (1.center);
		\draw (3.center) to (4.center);
		\draw [bend right, looseness=1.50] (4.center) to (5.center);
		\draw [bend left, looseness=1.50] (6.center) to (7.center);
		\draw (8.center) to (7.center);
		\draw (9.center) to (10.center);
		\draw (5.center) to (11.center);
		\draw [bend left, looseness=1.50] (10.center) to (11.center);
		\draw [bend left, looseness=1.50] (8.center) to (9.center);

\end{scope}

\begin{scope}[rotate=90,yshift=-4.8cm]
		\node [] (0) at (-0.75, -2.5) {};nodee
		\node [] (1) at (-1.75, -2) {};
		\node [] (2) at (1, -2.5) {};
		\node [] (3) at (2, -2) {};
		\node [] (4) at (3, -0.5) {};
		\node [] (5) at (3, 0.5) {};
		\node [] (6) at (-2.75, -0.5) {};
		\node [] (7) at (-2.75, 0.5) {};
		\node [] (8) at (-1.75, 2) {};
		\node [] (9) at (-0.75, 2.5) {};
		\node [] (10) at (1, 2.5) {};
		\node [] (11) at (2, 2) {};
		\node [style=nodee] (12) at (-1, -2) {};
		\node [style=nodee] (13) at (1.25, -2) {};
		\node [style=nodee] (14) at (2.75, 0) {};
		\node [style=nodee] (15) at (1.5, 2) {};
		\node [style=nodee] (16) at (-1.25, 2) {};
		\node [style=nodee] (17) at (-2.5, 0) {};
		\node [] (18) at (-1.75, 2) {};
		\node [] (21) at (2, 2) {};
		\node [] (22) at (-1.75, 5.25) {};
		\node [] (25) at (2, 5.25) {};
		\node [] (26) at (-1.25, 5.75) {};
		\node [] (27) at (1.5, 5.75) {};
		\node [] (28) at (-1.25, 1.5) {};
		\node [] (29) at (1.5, 1.5) {};

		\draw (0.center) to (2.center);
		\draw [bend right, looseness=1.50] (2.center) to (3.center);
		\draw [bend left, looseness=1.25] (0.center) to (1.center);
		\draw (6.center) to (1.center);
		\draw (3.center) to (4.center);
		\draw [bend right, looseness=1.50] (4.center) to (5.center);
		\draw [bend left, looseness=1.50] (6.center) to (7.center);
		\draw (8.center) to (7.center);
		\draw (9.center) to (10.center);
		\draw (5.center) to (11.center);
		\draw [bend left, looseness=1.50] (10.center) to (11.center);
		\draw [bend left, looseness=1.50] (8.center) to (9.center);
		\draw [] (22.center) to (18.center);
		\draw [] (25.center) to (21.center);
		\draw [ bend left=45, looseness=0.75] (22.center) to (26.center);
		\draw [ bend left=45, looseness=0.75] (27.center) to (25.center);
		\draw [] (26.center) to (27.center);
		\draw [ bend right=45] (18.center) to (28.center);
		\draw [ bend left=45] (21.center) to (29.center);
		\draw [] (28.center) to (29.center);

\end{scope}

\begin{scope}[rotate=90,yshift=-12.11cm]
		\node [] (0) at (-0.75, -2.5) {};nodee
		\node [] (1) at (-1.75, -2) {};
		\node [] (2) at (1, -2.5) {};
		\node [] (3) at (2, -2) {};
		\node [] (4) at (3, -0.5) {};
		\node [] (5) at (3, 0.5) {};
		\node [] (6) at (-2.75, -0.5) {};
		\node [] (7) at (-2.75, 0.5) {};
		\node [] (8) at (-1.75, 2) {};
		\node [] (9) at (-0.75, 2.5) {};
		\node [] (10) at (1, 2.5) {};
		\node [] (11) at (2, 2) {};
		\node [style=nodee] (12) at (-1, -2) {};
		\node [style=nodee] (13) at (1.25, -2) {};
		\node [style=nodee] (14) at (2.75, 0) {};
		\node [style=nodee] (15) at (1.5, 2) {};
		\node [style=nodee] (16) at (-1.25, 2) {};
		\node [style=nodee] (17) at (-2.5, 0) {};
		\node [] (18) at (-1.75, 2) {};
		\node [] (21) at (2, 2) {};
		\node [] (22) at (-1.75, 5.25) {};
		\node [] (25) at (2, 5.25) {};
		\node [] (26) at (-1.25, 5.75) {};
		\node [] (27) at (1.5, 5.75) {};
		\node [] (28) at (-1.25, 1.5) {};
		\node [] (29) at (1.5, 1.5) {};

		\draw (0.center) to (2.center);
		\draw [bend right, looseness=1.50] (2.center) to (3.center);
		\draw [bend left, looseness=1.25] (0.center) to (1.center);
		\draw (6.center) to (1.center);
		\draw (3.center) to (4.center);
		\draw [bend right, looseness=1.50] (4.center) to (5.center);
		\draw [bend left, looseness=1.50] (6.center) to (7.center);
		\draw (8.center) to (7.center);
		\draw (9.center) to (10.center);
		\draw (5.center) to (11.center);
		\draw [bend left, looseness=1.50] (10.center) to (11.center);
		\draw [bend left, looseness=1.50] (8.center) to (9.center);
		\draw [] (22.center) to (18.center);
		\draw [] (25.center) to (21.center);
		\draw [ bend left=45, looseness=0.75] (22.center) to (26.center);
		\draw [ bend left=45, looseness=0.75] (27.center) to (25.center);
		\draw [] (26.center) to (27.center);
		\draw [ bend right=45] (18.center) to (28.center);
		\draw [ bend left=45] (21.center) to (29.center);
		\draw [] (28.center) to (29.center);

\end{scope}

\begin{scope}[rotate=90,yshift=-19.35cm]
		\node  (0) at (-0.75, -2.5) {};
		\node  (1) at (-1.75, -2) {};
		\node  (2) at (1, -2.5) {};
		\node [] (3) at (2, -2) {};
		\node [] (4) at (3, -0.5) {};
		\node [] (5) at (3, 0.5) {};
		\node [] (6) at (-2.75, -0.5) {};
		\node [] (7) at (-2.75, 0.5) {};
		\node [] (8) at (-1.75, 2) {};
		\node [] (9) at (-0.75, 2.5) {};
		\node [] (10) at (1, 2.5) {};
		\node [] (11) at (2, 2) {};
		\node [style=nodee] (12) at (-1, -2) {};
		\node [style=nodee] (13) at (1.25, -2) {};
		\node [style=nodee] (14) at (2.75, 0) {};
		\node [style=nodee] (15) at (1.5, 2) {};
		\node [style=nodee] (16) at (-1.25, 2) {};
		\node [style=nodee] (17) at (-2.5, 0) {};
		\node [] (18) at (-1.75, 2) {};
		\node [] (21) at (2, 2) {};
		\node [] (22) at (-1.75, 5.25) {};
		\node [] (25) at (2, 5.25) {};
		\node [] (26) at (-1.25, 5.75) {};
		\node [] (27) at (1.5, 5.75) {};
		\node [] (28) at (-1.25, 1.5) {};
		\node [] (29) at (1.5, 1.5) {};

		\draw (0.center) to (2.center);
		\draw [bend right, looseness=1.50] (2.center) to (3.center);
		\draw [bend left, looseness=1.25] (0.center) to (1.center);
		\draw (6.center) to (1.center);
		\draw (3.center) to (4.center);
		\draw [bend right, looseness=1.50] (4.center) to (5.center);
		\draw [bend left, looseness=1.50] (6.center) to (7.center);
		\draw (8.center) to (7.center);
		\draw (9.center) to (10.center);
		\draw (5.center) to (11.center);
		\draw [bend left, looseness=1.50] (10.center) to (11.center);
		\draw [bend left, looseness=1.50] (8.center) to (9.center);
		\draw [] (22.center) to (18.center);
		\draw [] (25.center) to (21.center);
		\draw [ bend left=45, looseness=0.75] (22.center) to (26.center);
		\draw [ bend left=45, looseness=0.75] (27.center) to (25.center);
		\draw [] (26.center) to (27.center);
		\draw [ bend right=45] (18.center) to (28.center);
		\draw [ bend left=45] (21.center) to (29.center);
		\draw [] (28.center) to (29.center);

\end{scope}

\end{tikzpicture}

\caption{Hypergraph $LP_4$.}
\label{ex:phenylene}
		\end{figure}
		
		 It is easy to see that every edge $e \in E(LP_n)$ is a convex cut with the following property. Taking any two vertices $u,v$ from different components of $LP_n - e$, every shortest $u,v$-path contains $e$ (exactly once). Note, however, that the two vertices which lie in the intersection of two hyperedges are not separated by any of the cuts. But it is clear that the distance between such two vertices is $1$. Together there are $2(n-1)$ such pairs and therefore this number needs to be added to the Wiener index of $LP_n$. This is enough to calculate the Wiener index of $LP_n$ using cut method as follows. 
		 
		 Removing an edge of the form $\{6i + 1, 6i + 2, \ldots, 6i + 6\}$, where $i \in [n-2]$, produces four components where two of them contain a single vertex and the remaining two have $6i + 2$ and $6(n-i-1) + 2$ vertices, respectively. The cases when $i=0$ or $i = n-1$ give five components each,  four of them contain a single vertex, while the remaining one contains $6n - 4$ vertices. On the other hand, removing an edge of the form $\{6i + 5, 6i + 6, 6i + 7, 6i + 8\}$ produces two components with $6(i+1)$ and $6(n-i-1)$ vertices, respectively. Therefore, the contribution of all these cuts to the Wiener index of $LP_n$ for $n>1$  is
		\begin{align*}
				& \sum_{i=1}^{n-2} \left[ 2(6i + 2 + 6(n - i - 1) + 2) + (6i+2)(6n-6i-4) +1 \right]  \\
				+& 2 \left(\binom{4}{2} + 4 (6n - 4) \right)    \\
				 +&\sum_{i=0}^{n-2}\left[6(i+1)\cdot 6(n-i-1)\right]=  12n^3 + 6n^2 - 5n + 2,
		\end{align*}
		where the second line above comes from the contribution of the first hyperedge and the last hyperedge containing six vertices. Adding to this expression the contribution $2(n-1)$	 from previous paragraph and performing a straightforward computation we arrive to the following result. 
		\begin{prop}
			If $n \ge 2$ then, $ W(LP_n) = 12n^3 + 6n^2 - 3n$. 
		\end{prop}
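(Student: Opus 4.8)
The plan is to adapt the edge-by-edge contribution argument underlying Theorem~\ref{thm:cut_method_tree} rather than to invoke Theorem~\ref{thm:cut_method} directly, since $LP_n$ is neither $k$-uniform nor linear and hence fails the hypotheses of the latter. First I would make precise the two structural facts announced in the surrounding text: that each hyperedge $e \in E(LP_n)$ is a convex cut, and that for any two vertices $u,v$ lying in distinct components of $LP_n - e$ every shortest $u,v$-path uses $e$ exactly once. Granting this, every pair of vertices separated by $e$ contributes precisely $1$ to its distance through the unique crossing of $e$, so that the aggregate distance over all such pairs equals $\sum_{\{j,j'\}} n_j(e)\,n_{j'}(e)$, exactly as in the hypertree case. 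Summing this quantity over all $2n-1$ edges then accounts for the distances of all pairs that some cut separates, i.e. the bulk of $W(LP_n)$.

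The arithmetic core of the proof is a careful case analysis of the component sizes produced by each deletion. For an interior hexagonal edge $\{6i+1,\ldots,6i+6\}$ with $1 \le i \le n-2$, deletion leaves four components: the two private vertices $6i+3,6i+4$ as singletons, together with two large pieces of sizes $6i+2$ and $6(n-i-1)+2$, which sum with the singletons to $6n$ as a sanity check. The two extreme hexagons $i=0$ and $i=n-1$ behave differently and must be separated off: each splits into four singletons and one piece of size $6n-4$. For a square edge $\{6i+5,\ldots,6i+8\}$ with $0 \le i \le n-2$, deletion yields just two components, of sizes $6(i+1)$ and $6(n-i-1)$. Feeding these sizes into $\sum_{\{j,j'\}} n_j n_{j'}$ and organizing them into the three displayed sums produces the partial total $12n^3 + 6n^2 - 5n + 2$; I would treat this reduction to a polynomial as routine.

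The one genuinely non-routine point, and the step I expect to be the main obstacle, is that the cut argument above tallies only those pairs that \emph{some} cut separates, whereas $LP_n$ contains pairs separated by no cut at all: the two vertices in the intersection $e \cap e'$ of a hexagonal edge with an adjacent square edge always lie together in every $LP_n - f$. Each square edge shares the pair $\{6i+5,6i+6\}$ with the hexagon on one side and the pair $\{6i+7,6i+8\}$ with the hexagon on the other, giving two such pairs per square edge and thus $2(n-1)$ pairs in total; since each such pair is contained in a common edge, its distance is exactly $1$. These contributions are invisible to the cut tally and must be added by hand, so I would add $2(n-1)$ to the partial total. A final elementary simplification, $12n^3 + 6n^2 - 5n + 2 + 2(n-1) = 12n^3 + 6n^2 - 3n$, delivers the claimed formula.
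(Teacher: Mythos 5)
Your proposal is correct and follows essentially the same route as the paper's own argument: the same cut-by-cut case analysis (interior hexagons, the two extremal hexagons, and the square edges) yielding the partial sum $12n^3+6n^2-5n+2$, and the same correction term $2(n-1)$ for the pairs lying in an intersection of two hyperedges, which no cut separates. Nothing is missing; the identification of those unseparated pairs, which you single out as the non-routine step, is exactly the adjustment the paper makes.
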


		\subsection{More elaborate example}
		The cut method as developed in Section~\ref{sec:cut} assumes that a hypergraph is a $k$-uniform partial cube-hypergraph. In general this is a strong assumption. We have just demonstrated in Section~\ref{sec:trees} that the method can be extended also when the hypergraph is not $k$-uniform partial cube-hypergraph, provided that Propositions~\ref{prop:1} and~\ref{prop:2} remain valid. In the subsequent example we further elaborate this idea on a mulecular hypergraph $H$ of a Clar structure which is shown in Figure~\ref{fig:t}(a) and in~\cite[Fig. 3]{gutman-1999}.	
			
		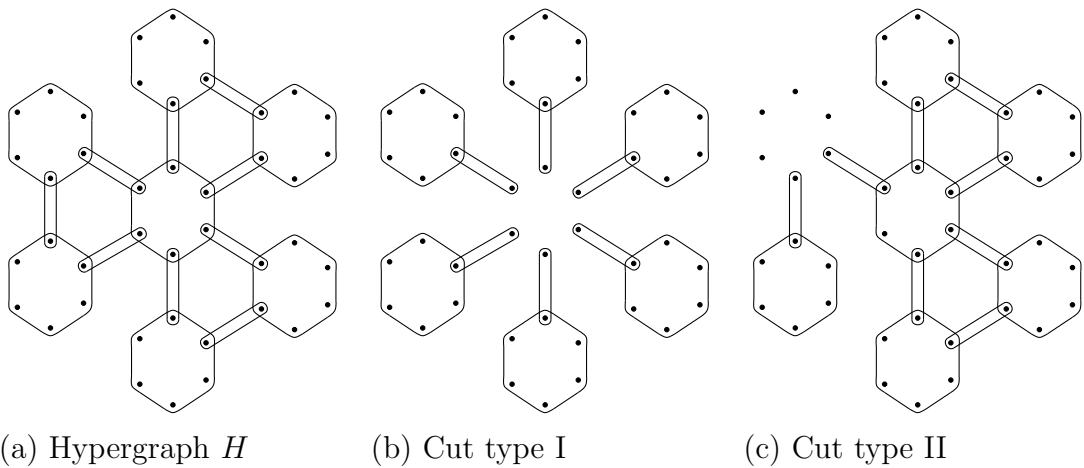
\begin{figure}[ht!]
		
		\centering
				\minipage{0.33\textwidth}
\begin{tikzpicture}[scale=0.22]
\tikzstyle{nodee}=[fill=black, draw=black, shape=circle, scale=0.15]
\begin{scope}[rotate=90]

		\node  (0) at (-0.75, 0) {};
		\node  (1) at (-1.75, 0.5) {};
		\node  (2) at (1, 0) {};
		\node  (3) at (2, 0.5) {};
		\node  (4) at (3, 2) {};
		\node  (5) at (3, 3) {};
		\node  (6) at (-2.75, 2) {};
		\node  (7) at (-2.75, 3) {};
		\node  (8) at (-1.75, 4.5) {};
		\node  (9) at (-0.75, 5) {};
		\node (10) at (1, 5) {};
		\node  (11) at (2, 4.5) {};
		\node [style=nodee] (12) at (-1, 0.5) {};
		\node [style=nodee] (13) at (1.25, 0.5) {};
		\node [style=nodee] (14) at (2.75, 2.5) {};
		\node [style=nodee] (15) at (1.5, 4.5) {};
		\node [style=nodee] (16) at (-1.25, 4.5) {};
		\node [style=nodee] (17) at (-2.5, 2.5) {};

		\draw (0.center) to (2.center);
		\draw [bend right, looseness=1.50] (2.center) to (3.center);
		\draw [bend left, looseness=1.25] (0.center) to (1.center);
		\draw (6.center) to (1.center);
		\draw (3.center) to (4.center);
		\draw [bend right, looseness=1.50] (4.center) to (5.center);
		\draw [bend left, looseness=1.50] (6.center) to (7.center);
		\draw (8.center) to (7.center);
		\draw (9.center) to (10.center);
		\draw (5.center) to (11.center);
		\draw [bend left, looseness=1.50] (10.center) to (11.center);
		\draw [bend left, looseness=1.50] (8.center) to (9.center);

\end{scope}

\begin{scope}
	\begin{scope}[name=hedge, rotate=-32, xshift=0.1cm, yshift=-1.45cm]
		\def\vr{2pt}
		\node  (0) at (0, 0.7) {};
		\node  (1) at (0, 0) {};
		\node  (2) at (4, 0) {};
		\node (3) at (4, 0.7) {};
		\draw (0.center) to (3.center);
		\draw [in=180, out=0] (1.center) to (2.center);
		\draw [bend left=90, looseness=1.50] (3.center) to (2.center);
		\draw [bend right=90, looseness=1.50] (0.center) to (1.center);		
	\end{scope}

	\begin{scope}[name=hedge, rotate=-90, xshift=2.45cm, yshift=-2.85cm]
		\def\vr{2pt}
		\node  (0) at (0, 0.7) {};
		\node  (1) at (0, 0) {};
		\node  (2) at (4, 0) {};
		\node (3) at (4, 0.7) {};
		\draw (0.center) to (3.center);
		\draw [in=180, out=0] (1.center) to (2.center);
		\draw [bend left=90, looseness=1.50] (3.center) to (2.center);
		\draw [bend right=90, looseness=1.50] (0.center) to (1.center);		
	\end{scope}	
	
	\begin{scope}[name=hedge, rotate=32, xshift=-4.6cm, yshift=-6.75cm]
		\def\vr{2pt}
		\node  (0) at (0, 0.7) {};
		\node  (1) at (0, 0) {};
		\node  (2) at (4, 0) {};
		\node (3) at (4, 0.7) {};
		\draw (0.center) to (3.center);
		\draw [in=180, out=0] (1.center) to (2.center);
		\draw [bend left=90, looseness=1.50] (3.center) to (2.center);
		\draw [bend right=90, looseness=1.50] (0.center) to (1.center);		
	\end{scope}	
	\begin{scope}[rotate=90,xshift=-9.1cm]
		
		\node  (0) at (-0.75, 0) {};
		\node  (1) at (-1.75, 0.5) {};
		\node  (2) at (1, 0) {};
		\node  (3) at (2, 0.5) {};
		\node  (4) at (3, 2) {};
		\node  (5) at (3, 3) {};
		\node  (6) at (-2.75, 2) {};
		\node  (7) at (-2.75, 3) {};
		\node  (8) at (-1.75, 4.5) {};
		\node  (9) at (-0.75, 5) {};
		\node (10) at (1, 5) {};
		\node  (11) at (2, 4.5) {};
		\node [style=nodee] (12) at (-1, 0.5) {};
		\node [style=nodee] (13) at (1.25, 0.5) {};
		\node [style=nodee] (14) at (2.75, 2.5) {};
		\node [style=nodee] (15) at (1.5, 4.5) {};
		\node [style=nodee] (16) at (-1.25, 4.5) {};
		\node [style=nodee] (17) at (-2.5, 2.5) {};

		\draw (0.center) to (2.center);
		\draw [bend right, looseness=1.50] (2.center) to (3.center);
		\draw [bend left, looseness=1.25] (0.center) to (1.center);
		\draw (6.center) to (1.center);
		\draw (3.center) to (4.center);
		\draw [bend right, looseness=1.50] (4.center) to (5.center);
		\draw [bend left, looseness=1.50] (6.center) to (7.center);
		\draw (8.center) to (7.center);
		\draw (9.center) to (10.center);
		\draw (5.center) to (11.center);
		\draw [bend left, looseness=1.50] (10.center) to (11.center);
		\draw [bend left, looseness=1.50] (8.center) to (9.center);
\end{scope}

\begin{scope}[rotate=90,xshift=-4.55cm, yshift=-7.35cm]
	
		\node  (0) at (-0.75, 0) {};
		\node  (1) at (-1.75, 0.5) {};
		\node  (2) at (1, 0) {};
		\node  (3) at (2, 0.5) {};
		\node  (4) at (3, 2) {};
		\node  (5) at (3, 3) {};
		\node  (6) at (-2.75, 2) {};
		\node  (7) at (-2.75, 3) {};
		\node  (8) at (-1.75, 4.5) {};
		\node  (9) at (-0.75, 5) {};
		\node (10) at (1, 5) {};
		\node  (11) at (2, 4.5) {};
		\node [style=nodee] (12) at (-1, 0.5) {};
		\node [style=nodee] (13) at (1.25, 0.5) {};
		\node [style=nodee] (14) at (2.75, 2.5) {};
		\node [style=nodee] (15) at (1.5, 4.5) {};
		\node [style=nodee] (16) at (-1.25, 4.5) {};
		\node [style=nodee] (17) at (-2.5, 2.5) {};

		\draw (0.center) to (2.center);
		\draw [bend right, looseness=1.50] (2.center) to (3.center);
		\draw [bend left, looseness=1.25] (0.center) to (1.center);
		\draw (6.center) to (1.center);
		\draw (3.center) to (4.center);
		\draw [bend right, looseness=1.50] (4.center) to (5.center);
		\draw [bend left, looseness=1.50] (6.center) to (7.center);
		\draw (8.center) to (7.center);
		\draw (9.center) to (10.center);
		\draw (5.center) to (11.center);
		\draw [bend left, looseness=1.50] (10.center) to (11.center);
		\draw [bend left, looseness=1.50] (8.center) to (9.center);
\end{scope}
\end{scope}

\begin{scope}[yscale=-1, yshift=-0.26cm]
	\begin{scope}[name=hedge, rotate=-32, xshift=0.1cm, yshift=-1.45cm]
		\def\vr{2pt}
		\node  (0) at (0, 0.7) {};
		\node  (1) at (0, 0) {};
		\node  (2) at (4, 0) {};
		\node (3) at (4, 0.7) {};
		\draw (0.center) to (3.center);
		\draw [in=180, out=0] (1.center) to (2.center);
		\draw [bend left=90, looseness=1.50] (3.center) to (2.center);
		\draw [bend right=90, looseness=1.50] (0.center) to (1.center);		
	\end{scope}

	\begin{scope}[name=hedge, rotate=-90, xshift=2.45cm, yshift=-2.85cm]
		\def\vr{2pt}
		\node  (0) at (0, 0.7) {};
		\node  (1) at (0, 0) {};
		\node  (2) at (4, 0) {};
		\node (3) at (4, 0.7) {};
		\draw (0.center) to (3.center);
		\draw [in=180, out=0] (1.center) to (2.center);
		\draw [bend left=90, looseness=1.50] (3.center) to (2.center);
		\draw [bend right=90, looseness=1.50] (0.center) to (1.center);		
	\end{scope}	
	
	\begin{scope}[name=hedge, rotate=32, xshift=-4.6cm, yshift=-6.75cm]
		\def\vr{2pt}
		\node  (0) at (0, 0.7) {};
		\node  (1) at (0, 0) {};
		\node  (2) at (4, 0) {};
		\node (3) at (4, 0.7) {};
		\draw (0.center) to (3.center);
		\draw [in=180, out=0] (1.center) to (2.center);
		\draw [bend left=90, looseness=1.50] (3.center) to (2.center);
		\draw [bend right=90, looseness=1.50] (0.center) to (1.center);		
	\end{scope}	
	\begin{scope}[rotate=90,xshift=-9.1cm]
		
		\node  (0) at (-0.75, 0) {};
		\node  (1) at (-1.75, 0.5) {};
		\node  (2) at (1, 0) {};
		\node  (3) at (2, 0.5) {};
		\node  (4) at (3, 2) {};
		\node  (5) at (3, 3) {};
		\node  (6) at (-2.75, 2) {};
		\node  (7) at (-2.75, 3) {};
		\node  (8) at (-1.75, 4.5) {};
		\node  (9) at (-0.75, 5) {};
		\node (10) at (1, 5) {};
		\node  (11) at (2, 4.5) {};
		\node [style=nodee] (12) at (-1, 0.5) {};
		\node [style=nodee] (13) at (1.25, 0.5) {};
		\node [style=nodee] (14) at (2.75, 2.5) {};
		\node [style=nodee] (15) at (1.5, 4.5) {};
		\node [style=nodee] (16) at (-1.25, 4.5) {};
		\node [style=nodee] (17) at (-2.5, 2.5) {};

		\draw (0.center) to (2.center);
		\draw [bend right, looseness=1.50] (2.center) to (3.center);
		\draw [bend left, looseness=1.25] (0.center) to (1.center);
		\draw (6.center) to (1.center);
		\draw (3.center) to (4.center);
		\draw [bend right, looseness=1.50] (4.center) to (5.center);
		\draw [bend left, looseness=1.50] (6.center) to (7.center);
		\draw (8.center) to (7.center);
		\draw (9.center) to (10.center);
		\draw (5.center) to (11.center);
		\draw [bend left, looseness=1.50] (10.center) to (11.center);
		\draw [bend left, looseness=1.50] (8.center) to (9.center);
\end{scope}

\begin{scope}[rotate=90,xshift=-4.55cm, yshift=-7.35cm]
	
		\node  (0) at (-0.75, 0) {};
		\node  (1) at (-1.75, 0.5) {};
		\node  (2) at (1, 0) {};
		\node  (3) at (2, 0.5) {};
		\node  (4) at (3, 2) {};
		\node  (5) at (3, 3) {};
		\node  (6) at (-2.75, 2) {};
		\node  (7) at (-2.75, 3) {};
		\node  (8) at (-1.75, 4.5) {};
		\node  (9) at (-0.75, 5) {};
		\node (10) at (1, 5) {};
		\node  (11) at (2, 4.5) {};
		\node [style=nodee] (12) at (-1, 0.5) {};
		\node [style=nodee] (13) at (1.25, 0.5) {};
		\node [style=nodee] (14) at (2.75, 2.5) {};
		\node [style=nodee] (15) at (1.5, 4.5) {};
		\node [style=nodee] (16) at (-1.25, 4.5) {};
		\node [style=nodee] (17) at (-2.5, 2.5) {};

		\draw (0.center) to (2.center);
		\draw [bend right, looseness=1.50] (2.center) to (3.center);
		\draw [bend left, looseness=1.25] (0.center) to (1.center);
		\draw (6.center) to (1.center);
		\draw (3.center) to (4.center);
		\draw [bend right, looseness=1.50] (4.center) to (5.center);
		\draw [bend left, looseness=1.50] (6.center) to (7.center);
		\draw (8.center) to (7.center);
		\draw (9.center) to (10.center);
		\draw (5.center) to (11.center);
		\draw [bend left, looseness=1.50] (10.center) to (11.center);
		\draw [bend left, looseness=1.50] (8.center) to (9.center);
\end{scope}
\end{scope}

\begin{scope}
\begin{scope}[name=hedge, rotate=-32, xshift=-8.6cm, yshift=-1.45cm]
		\def\vr{2pt}
		\node  (0) at (0, 0.7) {};
		\node  (1) at (0, 0) {};
		\node  (2) at (4, 0) {};
		\node (3) at (4, 0.7) {};
		\draw (0.center) to (3.center);
		\draw [in=180, out=0] (1.center) to (2.center);
		\draw [bend left=90, looseness=1.50] (3.center) to (2.center);
		\draw [bend right=90, looseness=1.50] (0.center) to (1.center);		
	\end{scope}

	\begin{scope}[name=hedge, rotate=-150, xshift=4.45cm, yshift=-1.5cm]
		\def\vr{2pt}
		\node  (0) at (0, 0.7) {};
		\node  (1) at (0, 0) {};
		\node  (2) at (4, 0) {};
		\node (3) at (4, 0.7) {};
		\draw (0.center) to (3.center);
		\draw [in=180, out=0] (1.center) to (2.center);
		\draw [bend left=90, looseness=1.50] (3.center) to (2.center);
		\draw [bend right=90, looseness=1.50] (0.center) to (1.center);		
	\end{scope}	
	
	\begin{scope}[name=hedge, rotate=90, xshift=-1.8cm, yshift=9.55cm]
		\def\vr{2pt}
		\node  (0) at (0, 0.7) {};
		\node  (1) at (0, 0) {};
		\node  (2) at (4, 0) {};
		\node (3) at (4, 0.7) {};
		\draw (0.center) to (3.center);
		\draw [in=180, out=0] (1.center) to (2.center);
		\draw [bend left=90, looseness=1.50] (3.center) to (2.center);
		\draw [bend right=90, looseness=1.50] (0.center) to (1.center);		
	\end{scope}	
	
	\begin{scope}[rotate=90,xshift=-4.45cm, yshift=7.4cm]
			\node  (0) at (-0.75, 0) {};
		\node  (1) at (-1.75, 0.5) {};
		\node  (2) at (1, 0) {};
		\node  (3) at (2, 0.5) {};
		\node  (4) at (3, 2) {};
		\node  (5) at (3, 3) {};
		\node  (6) at (-2.75, 2) {};
		\node  (7) at (-2.75, 3) {};
		\node  (8) at (-1.75, 4.5) {};
		\node  (9) at (-0.75, 5) {};
		\node (10) at (1, 5) {};
		\node  (11) at (2, 4.5) {};
		\node [style=nodee] (12) at (-1, 0.5) {};
		\node [style=nodee] (13) at (1.25, 0.5) {};
		\node [style=nodee] (14) at (2.75, 2.5) {};
		\node [style=nodee] (15) at (1.5, 4.5) {};
		\node [style=nodee] (16) at (-1.25, 4.5) {};
		\node [style=nodee] (17) at (-2.5, 2.5) {};

		\draw (0.center) to (2.center);
		\draw [bend right, looseness=1.50] (2.center) to (3.center);
		\draw [bend left, looseness=1.25] (0.center) to (1.center);
		\draw (6.center) to (1.center);
		\draw (3.center) to (4.center);
		\draw [bend right, looseness=1.50] (4.center) to (5.center);
		\draw [bend left, looseness=1.50] (6.center) to (7.center);
		\draw (8.center) to (7.center);
		\draw (9.center) to (10.center);
		\draw (5.center) to (11.center);
		\draw [bend left, looseness=1.50] (10.center) to (11.center);
		\draw [bend left, looseness=1.50] (8.center) to (9.center);
\end{scope}

	\begin{scope}[rotate=90,xshift=4.6cm, yshift=7.4cm]
			\node  (0) at (-0.75, 0) {};
		\node  (1) at (-1.75, 0.5) {};
		\node  (2) at (1, 0) {};
		\node  (3) at (2, 0.5) {};
		\node  (4) at (3, 2) {};
		\node  (5) at (3, 3) {};
		\node  (6) at (-2.75, 2) {};
		\node  (7) at (-2.75, 3) {};
		\node  (8) at (-1.75, 4.5) {};
		\node  (9) at (-0.75, 5) {};
		\node (10) at (1, 5) {};
		\node  (11) at (2, 4.5) {};
		\node [style=nodee] (12) at (-1, 0.5) {};
		\node [style=nodee] (13) at (1.25, 0.5) {};
		\node [style=nodee] (14) at (2.75, 2.5) {};
		\node [style=nodee] (15) at (1.5, 4.5) {};
		\node [style=nodee] (16) at (-1.25, 4.5) {};
		\node [style=nodee] (17) at (-2.5, 2.5) {};

		\draw (0.center) to (2.center);
		\draw [bend right, looseness=1.50] (2.center) to (3.center);
		\draw [bend left, looseness=1.25] (0.center) to (1.center);
		\draw (6.center) to (1.center);
		\draw (3.center) to (4.center);
		\draw [bend right, looseness=1.50] (4.center) to (5.center);
		\draw [bend left, looseness=1.50] (6.center) to (7.center);
		\draw (8.center) to (7.center);
		\draw (9.center) to (10.center);
		\draw (5.center) to (11.center);
		\draw [bend left, looseness=1.50] (10.center) to (11.center);
		\draw [bend left, looseness=1.50] (8.center) to (9.center);
\end{scope}

\end{scope}

	\end{tikzpicture}

	 (a) Hypergraph $H$
	\endminipage
			\minipage{0.33\textwidth}
\begin{tikzpicture}[scale=0.22]
\tikzstyle{nodee}=[fill=black, draw=black, shape=circle, scale=0.15]
\begin{scope}[rotate=90]

		\node  (0) at (-0.75, 0) {};
		\node  (1) at (-1.75, 0.5) {};
		\node  (2) at (1, 0) {};
		\node  (3) at (2, 0.5) {};
		\node  (4) at (3, 2) {};
		\node  (5) at (3, 3) {};
		\node  (6) at (-2.75, 2) {};
		\node  (7) at (-2.75, 3) {};
		\node  (8) at (-1.75, 4.5) {};
		\node  (9) at (-0.75, 5) {};
		\node (10) at (1, 5) {};
		\node  (11) at (2, 4.5) {};
		\node [style=nodee] (12) at (-1, 0.5) {};
		\node [style=nodee] (13) at (1.25, 0.5) {};
		\node [style=nodee] (14) at (2.75, 2.5) {};
		\node [style=nodee] (15) at (1.5, 4.5) {};
		\node [style=nodee] (16) at (-1.25, 4.5) {};
		\node [style=nodee] (17) at (-2.5, 2.5) {};


\end{scope}

\begin{scope}
	\begin{scope}[name=hedge, rotate=-32, xshift=0.1cm, yshift=-1.45cm]
		\def\vr{2pt}
		\node  (0) at (0, 0.7) {};
		\node  (1) at (0, 0) {};
		\node  (2) at (4, 0) {};
		\node (3) at (4, 0.7) {};
		\draw (0.center) to (3.center);
		\draw [in=180, out=0] (1.center) to (2.center);
		\draw [bend left=90, looseness=1.50] (3.center) to (2.center);
		\draw [bend right=90, looseness=1.50] (0.center) to (1.center);		
	\end{scope}

	\begin{scope}[name=hedge, rotate=-90, xshift=2.45cm, yshift=-2.85cm]
		\def\vr{2pt}
		\node  (0) at (0, 0.7) {};
		\node  (1) at (0, 0) {};
		\node  (2) at (4, 0) {};
		\node (3) at (4, 0.7) {};
		\draw (0.center) to (3.center);
		\draw [in=180, out=0] (1.center) to (2.center);
		\draw [bend left=90, looseness=1.50] (3.center) to (2.center);
		\draw [bend right=90, looseness=1.50] (0.center) to (1.center);		
	\end{scope}	
	
	\begin{scope}[name=hedge, rotate=32, xshift=-4.6cm, yshift=-6.75cm]
		\def\vr{2pt}
		\node  (0) at (0, 0.7) {};
		\node  (1) at (0, 0) {};
		\node  (2) at (4, 0) {};
		\node (3) at (4, 0.7) {};
	\end{scope}	
	\begin{scope}[rotate=90,xshift=-9.1cm]
		
		\node  (0) at (-0.75, 0) {};
		\node  (1) at (-1.75, 0.5) {};
		\node  (2) at (1, 0) {};
		\node  (3) at (2, 0.5) {};
		\node  (4) at (3, 2) {};
		\node  (5) at (3, 3) {};
		\node  (6) at (-2.75, 2) {};
		\node  (7) at (-2.75, 3) {};
		\node  (8) at (-1.75, 4.5) {};
		\node  (9) at (-0.75, 5) {};
		\node (10) at (1, 5) {};
		\node  (11) at (2, 4.5) {};
		\node [style=nodee] (12) at (-1, 0.5) {};
		\node [style=nodee] (13) at (1.25, 0.5) {};
		\node [style=nodee] (14) at (2.75, 2.5) {};
		\node [style=nodee] (15) at (1.5, 4.5) {};
		\node [style=nodee] (16) at (-1.25, 4.5) {};
		\node [style=nodee] (17) at (-2.5, 2.5) {};

		\draw (0.center) to (2.center);
		\draw [bend right, looseness=1.50] (2.center) to (3.center);
		\draw [bend left, looseness=1.25] (0.center) to (1.center);
		\draw (6.center) to (1.center);
		\draw (3.center) to (4.center);
		\draw [bend right, looseness=1.50] (4.center) to (5.center);
		\draw [bend left, looseness=1.50] (6.center) to (7.center);
		\draw (8.center) to (7.center);
		\draw (9.center) to (10.center);
		\draw (5.center) to (11.center);
		\draw [bend left, looseness=1.50] (10.center) to (11.center);
		\draw [bend left, looseness=1.50] (8.center) to (9.center);
\end{scope}

\begin{scope}[rotate=90,xshift=-4.55cm, yshift=-7.35cm]
	
		\node  (0) at (-0.75, 0) {};
		\node  (1) at (-1.75, 0.5) {};
		\node  (2) at (1, 0) {};
		\node  (3) at (2, 0.5) {};
		\node  (4) at (3, 2) {};
		\node  (5) at (3, 3) {};
		\node  (6) at (-2.75, 2) {};
		\node  (7) at (-2.75, 3) {};
		\node  (8) at (-1.75, 4.5) {};
		\node  (9) at (-0.75, 5) {};
		\node (10) at (1, 5) {};
		\node  (11) at (2, 4.5) {};
		\node [style=nodee] (12) at (-1, 0.5) {};
		\node [style=nodee] (13) at (1.25, 0.5) {};
		\node [style=nodee] (14) at (2.75, 2.5) {};
		\node [style=nodee] (15) at (1.5, 4.5) {};
		\node [style=nodee] (16) at (-1.25, 4.5) {};
		\node [style=nodee] (17) at (-2.5, 2.5) {};

		\draw (0.center) to (2.center);
		\draw [bend right, looseness=1.50] (2.center) to (3.center);
		\draw [bend left, looseness=1.25] (0.center) to (1.center);
		\draw (6.center) to (1.center);
		\draw (3.center) to (4.center);
		\draw [bend right, looseness=1.50] (4.center) to (5.center);
		\draw [bend left, looseness=1.50] (6.center) to (7.center);
		\draw (8.center) to (7.center);
		\draw (9.center) to (10.center);
		\draw (5.center) to (11.center);
		\draw [bend left, looseness=1.50] (10.center) to (11.center);
		\draw [bend left, looseness=1.50] (8.center) to (9.center);
\end{scope}
\end{scope}

\begin{scope}[yscale=-1, yshift=-0.26cm]
	\begin{scope}[name=hedge, rotate=-32, xshift=0.1cm, yshift=-1.45cm]
		\def\vr{2pt}
		\node  (0) at (0, 0.7) {};
		\node  (1) at (0, 0) {};
		\node  (2) at (4, 0) {};
		\node (3) at (4, 0.7) {};
		\draw (0.center) to (3.center);
		\draw [in=180, out=0] (1.center) to (2.center);
		\draw [bend left=90, looseness=1.50] (3.center) to (2.center);
		\draw [bend right=90, looseness=1.50] (0.center) to (1.center);		
	\end{scope}

	\begin{scope}[name=hedge, rotate=-90, xshift=2.45cm, yshift=-2.85cm]
		\def\vr{2pt}
		\node  (0) at (0, 0.7) {};
		\node  (1) at (0, 0) {};
		\node  (2) at (4, 0) {};
		\node (3) at (4, 0.7) {};
		\draw (0.center) to (3.center);
		\draw [in=180, out=0] (1.center) to (2.center);
		\draw [bend left=90, looseness=1.50] (3.center) to (2.center);
		\draw [bend right=90, looseness=1.50] (0.center) to (1.center);		
	\end{scope}	
	
	\begin{scope}[name=hedge, rotate=32, xshift=-4.6cm, yshift=-6.75cm]
		\def\vr{2pt}
		\node  (0) at (0, 0.7) {};
		\node  (1) at (0, 0) {};
		\node  (2) at (4, 0) {};
		\node (3) at (4, 0.7) {};
	\end{scope}	
	\begin{scope}[rotate=90,xshift=-9.1cm]
		
		\node  (0) at (-0.75, 0) {};
		\node  (1) at (-1.75, 0.5) {};
		\node  (2) at (1, 0) {};
		\node  (3) at (2, 0.5) {};
		\node  (4) at (3, 2) {};
		\node  (5) at (3, 3) {};
		\node  (6) at (-2.75, 2) {};
		\node  (7) at (-2.75, 3) {};
		\node  (8) at (-1.75, 4.5) {};
		\node  (9) at (-0.75, 5) {};
		\node (10) at (1, 5) {};
		\node  (11) at (2, 4.5) {};
		\node [style=nodee] (12) at (-1, 0.5) {};
		\node [style=nodee] (13) at (1.25, 0.5) {};
		\node [style=nodee] (14) at (2.75, 2.5) {};
		\node [style=nodee] (15) at (1.5, 4.5) {};
		\node [style=nodee] (16) at (-1.25, 4.5) {};
		\node [style=nodee] (17) at (-2.5, 2.5) {};

		\draw (0.center) to (2.center);
		\draw [bend right, looseness=1.50] (2.center) to (3.center);
		\draw [bend left, looseness=1.25] (0.center) to (1.center);
		\draw (6.center) to (1.center);
		\draw (3.center) to (4.center);
		\draw [bend right, looseness=1.50] (4.center) to (5.center);
		\draw [bend left, looseness=1.50] (6.center) to (7.center);
		\draw (8.center) to (7.center);
		\draw (9.center) to (10.center);
		\draw (5.center) to (11.center);
		\draw [bend left, looseness=1.50] (10.center) to (11.center);
		\draw [bend left, looseness=1.50] (8.center) to (9.center);
\end{scope}

\begin{scope}[rotate=90,xshift=-4.55cm, yshift=-7.35cm]
	
		\node  (0) at (-0.75, 0) {};
		\node  (1) at (-1.75, 0.5) {};
		\node  (2) at (1, 0) {};
		\node  (3) at (2, 0.5) {};
		\node  (4) at (3, 2) {};
		\node  (5) at (3, 3) {};
		\node  (6) at (-2.75, 2) {};
		\node  (7) at (-2.75, 3) {};
		\node  (8) at (-1.75, 4.5) {};
		\node  (9) at (-0.75, 5) {};
		\node (10) at (1, 5) {};
		\node  (11) at (2, 4.5) {};
		\node [style=nodee] (12) at (-1, 0.5) {};
		\node [style=nodee] (13) at (1.25, 0.5) {};
		\node [style=nodee] (14) at (2.75, 2.5) {};
		\node [style=nodee] (15) at (1.5, 4.5) {};
		\node [style=nodee] (16) at (-1.25, 4.5) {};
		\node [style=nodee] (17) at (-2.5, 2.5) {};

		\draw (0.center) to (2.center);
		\draw [bend right, looseness=1.50] (2.center) to (3.center);
		\draw [bend left, looseness=1.25] (0.center) to (1.center);
		\draw (6.center) to (1.center);
		\draw (3.center) to (4.center);
		\draw [bend right, looseness=1.50] (4.center) to (5.center);
		\draw [bend left, looseness=1.50] (6.center) to (7.center);
		\draw (8.center) to (7.center);
		\draw (9.center) to (10.center);
		\draw (5.center) to (11.center);
		\draw [bend left, looseness=1.50] (10.center) to (11.center);
		\draw [bend left, looseness=1.50] (8.center) to (9.center);
\end{scope}
\end{scope}

\begin{scope}
\begin{scope}[name=hedge, rotate=-32, xshift=-8.6cm, yshift=-1.45cm]
		\def\vr{2pt}
		\node  (0) at (0, 0.7) {};
		\node  (1) at (0, 0) {};
		\node  (2) at (4, 0) {};
		\node (3) at (4, 0.7) {};
		\draw (0.center) to (3.center);
		\draw [in=180, out=0] (1.center) to (2.center);
		\draw [bend left=90, looseness=1.50] (3.center) to (2.center);
		\draw [bend right=90, looseness=1.50] (0.center) to (1.center);		
	\end{scope}

	\begin{scope}[name=hedge, rotate=-150, xshift=4.45cm, yshift=-1.5cm]
		\def\vr{2pt}
		\node  (0) at (0, 0.7) {};
		\node  (1) at (0, 0) {};
		\node  (2) at (4, 0) {};
		\node (3) at (4, 0.7) {};
		\draw (0.center) to (3.center);
		\draw [in=180, out=0] (1.center) to (2.center);
		\draw [bend left=90, looseness=1.50] (3.center) to (2.center);
		\draw [bend right=90, looseness=1.50] (0.center) to (1.center);		
	\end{scope}	
	
	\begin{scope}[name=hedge, rotate=90, xshift=-1.8cm, yshift=9.55cm]
		\def\vr{2pt}
		\node  (0) at (0, 0.7) {};
		\node  (1) at (0, 0) {};
		\node  (2) at (4, 0) {};
		\node (3) at (4, 0.7) {};
	\end{scope}	
	
	\begin{scope}[rotate=90,xshift=-4.45cm, yshift=7.4cm]
			\node  (0) at (-0.75, 0) {};
		\node  (1) at (-1.75, 0.5) {};
		\node  (2) at (1, 0) {};
		\node  (3) at (2, 0.5) {};
		\node  (4) at (3, 2) {};
		\node  (5) at (3, 3) {};
		\node  (6) at (-2.75, 2) {};
		\node  (7) at (-2.75, 3) {};
		\node  (8) at (-1.75, 4.5) {};
		\node  (9) at (-0.75, 5) {};
		\node (10) at (1, 5) {};
		\node  (11) at (2, 4.5) {};
		\node [style=nodee] (12) at (-1, 0.5) {};
		\node [style=nodee] (13) at (1.25, 0.5) {};
		\node [style=nodee] (14) at (2.75, 2.5) {};
		\node [style=nodee] (15) at (1.5, 4.5) {};
		\node [style=nodee] (16) at (-1.25, 4.5) {};
		\node [style=nodee] (17) at (-2.5, 2.5) {};

		\draw (0.center) to (2.center);
		\draw [bend right, looseness=1.50] (2.center) to (3.center);
		\draw [bend left, looseness=1.25] (0.center) to (1.center);
		\draw (6.center) to (1.center);
		\draw (3.center) to (4.center);
		\draw [bend right, looseness=1.50] (4.center) to (5.center);
		\draw [bend left, looseness=1.50] (6.center) to (7.center);
		\draw (8.center) to (7.center);
		\draw (9.center) to (10.center);
		\draw (5.center) to (11.center);
		\draw [bend left, looseness=1.50] (10.center) to (11.center);
		\draw [bend left, looseness=1.50] (8.center) to (9.center);
\end{scope}

	\begin{scope}[rotate=90,xshift=4.6cm, yshift=7.4cm]
			\node  (0) at (-0.75, 0) {};
		\node  (1) at (-1.75, 0.5) {};
		\node  (2) at (1, 0) {};
		\node  (3) at (2, 0.5) {};
		\node  (4) at (3, 2) {};
		\node  (5) at (3, 3) {};
		\node  (6) at (-2.75, 2) {};
		\node  (7) at (-2.75, 3) {};
		\node  (8) at (-1.75, 4.5) {};
		\node  (9) at (-0.75, 5) {};
		\node (10) at (1, 5) {};
		\node  (11) at (2, 4.5) {};
		\node [style=nodee] (12) at (-1, 0.5) {};
		\node [style=nodee] (13) at (1.25, 0.5) {};
		\node [style=nodee] (14) at (2.75, 2.5) {};
		\node [style=nodee] (15) at (1.5, 4.5) {};
		\node [style=nodee] (16) at (-1.25, 4.5) {};
		\node [style=nodee] (17) at (-2.5, 2.5) {};

		\draw (0.center) to (2.center);
		\draw [bend right, looseness=1.50] (2.center) to (3.center);
		\draw [bend left, looseness=1.25] (0.center) to (1.center);
		\draw (6.center) to (1.center);
		\draw (3.center) to (4.center);
		\draw [bend right, looseness=1.50] (4.center) to (5.center);
		\draw [bend left, looseness=1.50] (6.center) to (7.center);
		\draw (8.center) to (7.center);
		\draw (9.center) to (10.center);
		\draw (5.center) to (11.center);
		\draw [bend left, looseness=1.50] (10.center) to (11.center);
		\draw [bend left, looseness=1.50] (8.center) to (9.center);
\end{scope}

\end{scope}

	\end{tikzpicture}

 (b) Cut type I
	\endminipage
		\minipage{0.33\textwidth}
\begin{tikzpicture}[scale=0.22]
\tikzstyle{nodee}=[fill=black, draw=black, shape=circle, scale=0.15]
\begin{scope}[rotate=90]

		\node  (0) at (-0.75, 0) {};
		\node  (1) at (-1.75, 0.5) {};
		\node  (2) at (1, 0) {};
		\node  (3) at (2, 0.5) {};
		\node  (4) at (3, 2) {};
		\node  (5) at (3, 3) {};
		\node  (6) at (-2.75, 2) {};
		\node  (7) at (-2.75, 3) {};
		\node  (8) at (-1.75, 4.5) {};
		\node  (9) at (-0.75, 5) {};
		\node (10) at (1, 5) {};
		\node  (11) at (2, 4.5) {};
		\node [style=nodee] (12) at (-1, 0.5) {};
		\node [style=nodee] (13) at (1.25, 0.5) {};
		\node [style=nodee] (14) at (2.75, 2.5) {};
		\node [style=nodee] (15) at (1.5, 4.5) {};
		\node [style=nodee] (16) at (-1.25, 4.5) {};
		\node [style=nodee] (17) at (-2.5, 2.5) {};

		\draw (0.center) to (2.center);
		\draw [bend right, looseness=1.50] (2.center) to (3.center);
		\draw [bend left, looseness=1.25] (0.center) to (1.center);
		\draw (6.center) to (1.center);
		\draw (3.center) to (4.center);
		\draw [bend right, looseness=1.50] (4.center) to (5.center);
		\draw [bend left, looseness=1.50] (6.center) to (7.center);
		\draw (8.center) to (7.center);
		\draw (9.center) to (10.center);
		\draw (5.center) to (11.center);
		\draw [bend left, looseness=1.50] (10.center) to (11.center);
		\draw [bend left, looseness=1.50] (8.center) to (9.center);

\end{scope}

\begin{scope}
	\begin{scope}[name=hedge, rotate=-32, xshift=0.1cm, yshift=-1.45cm]
		\def\vr{2pt}
		\node  (0) at (0, 0.7) {};
		\node  (1) at (0, 0) {};
		\node  (2) at (4, 0) {};
		\node (3) at (4, 0.7) {};
		\draw (0.center) to (3.center);
		\draw [in=180, out=0] (1.center) to (2.center);
		\draw [bend left=90, looseness=1.50] (3.center) to (2.center);
		\draw [bend right=90, looseness=1.50] (0.center) to (1.center);		
	\end{scope}

	\begin{scope}[name=hedge, rotate=-90, xshift=2.45cm, yshift=-2.85cm]
		\def\vr{2pt}
		\node  (0) at (0, 0.7) {};
		\node  (1) at (0, 0) {};
		\node  (2) at (4, 0) {};
		\node (3) at (4, 0.7) {};
		\draw (0.center) to (3.center);
		\draw [in=180, out=0] (1.center) to (2.center);
		\draw [bend left=90, looseness=1.50] (3.center) to (2.center);
		\draw [bend right=90, looseness=1.50] (0.center) to (1.center);		
	\end{scope}	
	
	\begin{scope}[name=hedge, rotate=32, xshift=-4.6cm, yshift=-6.75cm]
		\def\vr{2pt}
		\node  (0) at (0, 0.7) {};
		\node  (1) at (0, 0) {};
		\node  (2) at (4, 0) {};
		\node (3) at (4, 0.7) {};
		\draw (0.center) to (3.center);
		\draw [in=180, out=0] (1.center) to (2.center);
		\draw [bend left=90, looseness=1.50] (3.center) to (2.center);
		\draw [bend right=90, looseness=1.50] (0.center) to (1.center);		
	\end{scope}	
	\begin{scope}[rotate=90,xshift=-9.1cm]
		
		\node  (0) at (-0.75, 0) {};
		\node  (1) at (-1.75, 0.5) {};
		\node  (2) at (1, 0) {};
		\node  (3) at (2, 0.5) {};
		\node  (4) at (3, 2) {};
		\node  (5) at (3, 3) {};
		\node  (6) at (-2.75, 2) {};
		\node  (7) at (-2.75, 3) {};
		\node  (8) at (-1.75, 4.5) {};
		\node  (9) at (-0.75, 5) {};
		\node (10) at (1, 5) {};
		\node  (11) at (2, 4.5) {};
		\node [style=nodee] (12) at (-1, 0.5) {};
		\node [style=nodee] (13) at (1.25, 0.5) {};
		\node [style=nodee] (14) at (2.75, 2.5) {};
		\node [style=nodee] (15) at (1.5, 4.5) {};
		\node [style=nodee] (16) at (-1.25, 4.5) {};
		\node [style=nodee] (17) at (-2.5, 2.5) {};

		\draw (0.center) to (2.center);
		\draw [bend right, looseness=1.50] (2.center) to (3.center);
		\draw [bend left, looseness=1.25] (0.center) to (1.center);
		\draw (6.center) to (1.center);
		\draw (3.center) to (4.center);
		\draw [bend right, looseness=1.50] (4.center) to (5.center);
		\draw [bend left, looseness=1.50] (6.center) to (7.center);
		\draw (8.center) to (7.center);
		\draw (9.center) to (10.center);
		\draw (5.center) to (11.center);
		\draw [bend left, looseness=1.50] (10.center) to (11.center);
		\draw [bend left, looseness=1.50] (8.center) to (9.center);
\end{scope}

\begin{scope}[rotate=90,xshift=-4.55cm, yshift=-7.35cm]
	
		\node  (0) at (-0.75, 0) {};
		\node  (1) at (-1.75, 0.5) {};
		\node  (2) at (1, 0) {};
		\node  (3) at (2, 0.5) {};
		\node  (4) at (3, 2) {};
		\node  (5) at (3, 3) {};
		\node  (6) at (-2.75, 2) {};
		\node  (7) at (-2.75, 3) {};
		\node  (8) at (-1.75, 4.5) {};
		\node  (9) at (-0.75, 5) {};
		\node (10) at (1, 5) {};
		\node  (11) at (2, 4.5) {};
		\node [style=nodee] (12) at (-1, 0.5) {};
		\node [style=nodee] (13) at (1.25, 0.5) {};
		\node [style=nodee] (14) at (2.75, 2.5) {};
		\node [style=nodee] (15) at (1.5, 4.5) {};
		\node [style=nodee] (16) at (-1.25, 4.5) {};
		\node [style=nodee] (17) at (-2.5, 2.5) {};

		\draw (0.center) to (2.center);
		\draw [bend right, looseness=1.50] (2.center) to (3.center);
		\draw [bend left, looseness=1.25] (0.center) to (1.center);
		\draw (6.center) to (1.center);
		\draw (3.center) to (4.center);
		\draw [bend right, looseness=1.50] (4.center) to (5.center);
		\draw [bend left, looseness=1.50] (6.center) to (7.center);
		\draw (8.center) to (7.center);
		\draw (9.center) to (10.center);
		\draw (5.center) to (11.center);
		\draw [bend left, looseness=1.50] (10.center) to (11.center);
		\draw [bend left, looseness=1.50] (8.center) to (9.center);
\end{scope}
\end{scope}

\begin{scope}[yscale=-1, yshift=-0.26cm]
	\begin{scope}[name=hedge, rotate=-32, xshift=0.1cm, yshift=-1.45cm]
		\def\vr{2pt}
		\node  (0) at (0, 0.7) {};
		\node  (1) at (0, 0) {};
		\node  (2) at (4, 0) {};
		\node (3) at (4, 0.7) {};
		\draw (0.center) to (3.center);
		\draw [in=180, out=0] (1.center) to (2.center);
		\draw [bend left=90, looseness=1.50] (3.center) to (2.center);
		\draw [bend right=90, looseness=1.50] (0.center) to (1.center);		
	\end{scope}

	\begin{scope}[name=hedge, rotate=-90, xshift=2.45cm, yshift=-2.85cm]
		\def\vr{2pt}
		\node  (0) at (0, 0.7) {};
		\node  (1) at (0, 0) {};
		\node  (2) at (4, 0) {};
		\node (3) at (4, 0.7) {};
		\draw (0.center) to (3.center);
		\draw [in=180, out=0] (1.center) to (2.center);
		\draw [bend left=90, looseness=1.50] (3.center) to (2.center);
		\draw [bend right=90, looseness=1.50] (0.center) to (1.center);		
	\end{scope}	
	
	\begin{scope}[name=hedge, rotate=32, xshift=-4.6cm, yshift=-6.75cm]
		\def\vr{2pt}
		\node  (0) at (0, 0.7) {};
		\node  (1) at (0, 0) {};
		\node  (2) at (4, 0) {};
		\node (3) at (4, 0.7) {};
		\draw (0.center) to (3.center);
		\draw [in=180, out=0] (1.center) to (2.center);
		\draw [bend left=90, looseness=1.50] (3.center) to (2.center);
		\draw [bend right=90, looseness=1.50] (0.center) to (1.center);		
	\end{scope}	
	\begin{scope}[rotate=90,xshift=-9.1cm]
		
		\node  (0) at (-0.75, 0) {};
		\node  (1) at (-1.75, 0.5) {};
		\node  (2) at (1, 0) {};
		\node  (3) at (2, 0.5) {};
		\node  (4) at (3, 2) {};
		\node  (5) at (3, 3) {};
		\node  (6) at (-2.75, 2) {};
		\node  (7) at (-2.75, 3) {};
		\node  (8) at (-1.75, 4.5) {};
		\node  (9) at (-0.75, 5) {};
		\node (10) at (1, 5) {};
		\node  (11) at (2, 4.5) {};
		\node [style=nodee] (12) at (-1, 0.5) {};
		\node [style=nodee] (13) at (1.25, 0.5) {};
		\node [style=nodee] (14) at (2.75, 2.5) {};
		\node [style=nodee] (15) at (1.5, 4.5) {};
		\node [style=nodee] (16) at (-1.25, 4.5) {};
		\node [style=nodee] (17) at (-2.5, 2.5) {};

		\draw (0.center) to (2.center);
		\draw [bend right, looseness=1.50] (2.center) to (3.center);
		\draw [bend left, looseness=1.25] (0.center) to (1.center);
		\draw (6.center) to (1.center);
		\draw (3.center) to (4.center);
		\draw [bend right, looseness=1.50] (4.center) to (5.center);
		\draw [bend left, looseness=1.50] (6.center) to (7.center);
		\draw (8.center) to (7.center);
		\draw (9.center) to (10.center);
		\draw (5.center) to (11.center);
		\draw [bend left, looseness=1.50] (10.center) to (11.center);
		\draw [bend left, looseness=1.50] (8.center) to (9.center);
\end{scope}

\begin{scope}[rotate=90,xshift=-4.55cm, yshift=-7.35cm]
	
		\node  (0) at (-0.75, 0) {};
		\node  (1) at (-1.75, 0.5) {};
		\node  (2) at (1, 0) {};
		\node  (3) at (2, 0.5) {};
		\node  (4) at (3, 2) {};
		\node  (5) at (3, 3) {};
		\node  (6) at (-2.75, 2) {};
		\node  (7) at (-2.75, 3) {};
		\node  (8) at (-1.75, 4.5) {};
		\node  (9) at (-0.75, 5) {};
		\node (10) at (1, 5) {};
		\node  (11) at (2, 4.5) {};
		\node [style=nodee] (12) at (-1, 0.5) {};
		\node [style=nodee] (13) at (1.25, 0.5) {};
		\node [style=nodee] (14) at (2.75, 2.5) {};
		\node [style=nodee] (15) at (1.5, 4.5) {};
		\node [style=nodee] (16) at (-1.25, 4.5) {};
		\node [style=nodee] (17) at (-2.5, 2.5) {};

		\draw (0.center) to (2.center);
		\draw [bend right, looseness=1.50] (2.center) to (3.center);
		\draw [bend left, looseness=1.25] (0.center) to (1.center);
		\draw (6.center) to (1.center);
		\draw (3.center) to (4.center);
		\draw [bend right, looseness=1.50] (4.center) to (5.center);
		\draw [bend left, looseness=1.50] (6.center) to (7.center);
		\draw (8.center) to (7.center);
		\draw (9.center) to (10.center);
		\draw (5.center) to (11.center);
		\draw [bend left, looseness=1.50] (10.center) to (11.center);
		\draw [bend left, looseness=1.50] (8.center) to (9.center);
\end{scope}
\end{scope}

\begin{scope}
\begin{scope}[name=hedge, rotate=-32, xshift=-8.6cm, yshift=-1.45cm]
		\def\vr{2pt}
		\node  (0) at (0, 0.7) {};
		\node  (1) at (0, 0) {};
		\node  (2) at (4, 0) {};
		\node (3) at (4, 0.7) {};
		\draw (0.center) to (3.center);
		\draw [in=180, out=0] (1.center) to (2.center);
		\draw [bend left=90, looseness=1.50] (3.center) to (2.center);
		\draw [bend right=90, looseness=1.50] (0.center) to (1.center);		
	\end{scope}

	\begin{scope}[name=hedge, rotate=-150, xshift=4.45cm, yshift=-1.5cm]
		\def\vr{2pt}
		\node  (0) at (0, 0.7) {};
		\node  (1) at (0, 0) {};
		\node  (2) at (4, 0) {};
		\node (3) at (4, 0.7) {};
	\end{scope}	
	
	\begin{scope}[name=hedge, rotate=90, xshift=-1.8cm, yshift=9.55cm]
		\def\vr{2pt}
		\node  (0) at (0, 0.7) {};
		\node  (1) at (0, 0) {};
		\node  (2) at (4, 0) {};
		\node (3) at (4, 0.7) {};
		\draw (0.center) to (3.center);
		\draw [in=180, out=0] (1.center) to (2.center);
		\draw [bend left=90, looseness=1.50] (3.center) to (2.center);
		\draw [bend right=90, looseness=1.50] (0.center) to (1.center);		
	\end{scope}	
	
	\begin{scope}[rotate=90,xshift=-4.45cm, yshift=7.4cm]
			\node  (0) at (-0.75, 0) {};
		\node  (1) at (-1.75, 0.5) {};
		\node  (2) at (1, 0) {};
		\node  (3) at (2, 0.5) {};
		\node  (4) at (3, 2) {};
		\node  (5) at (3, 3) {};
		\node  (6) at (-2.75, 2) {};
		\node  (7) at (-2.75, 3) {};
		\node  (8) at (-1.75, 4.5) {};
		\node  (9) at (-0.75, 5) {};
		\node (10) at (1, 5) {};
		\node  (11) at (2, 4.5) {};
		\node [style=nodee] (12) at (-1, 0.5) {};
		\node [style=nodee] (13) at (1.25, 0.5) {};
		\node [style=nodee] (14) at (2.75, 2.5) {};
		\node [style=nodee] (15) at (1.5, 4.5) {};
		\node [style=nodee] (16) at (-1.25, 4.5) {};
		\node [style=nodee] (17) at (-2.5, 2.5) {};

		\draw (0.center) to (2.center);
		\draw [bend right, looseness=1.50] (2.center) to (3.center);
		\draw [bend left, looseness=1.25] (0.center) to (1.center);
		\draw (6.center) to (1.center);
		\draw (3.center) to (4.center);
		\draw [bend right, looseness=1.50] (4.center) to (5.center);
		\draw [bend left, looseness=1.50] (6.center) to (7.center);
		\draw (8.center) to (7.center);
		\draw (9.center) to (10.center);
		\draw (5.center) to (11.center);
		\draw [bend left, looseness=1.50] (10.center) to (11.center);
		\draw [bend left, looseness=1.50] (8.center) to (9.center);
\end{scope}

	\begin{scope}[rotate=90,xshift=4.6cm, yshift=7.4cm]
			\node  (0) at (-0.75, 0) {};
		\node  (1) at (-1.75, 0.5) {};
		\node  (2) at (1, 0) {};
		\node  (3) at (2, 0.5) {};
		\node  (4) at (3, 2) {};
		\node  (5) at (3, 3) {};
		\node  (6) at (-2.75, 2) {};
		\node  (7) at (-2.75, 3) {};
		\node  (8) at (-1.75, 4.5) {};
		\node  (9) at (-0.75, 5) {};
		\node (10) at (1, 5) {};
		\node  (11) at (2, 4.5) {};
		\node [style=nodee] (12) at (-1, 0.5) {};
		\node [style=nodee] (13) at (1.25, 0.5) {};
		\node [style=nodee] (14) at (2.75, 2.5) {};
		\node [style=nodee] (15) at (1.5, 4.5) {};
		\node [style=nodee] (16) at (-1.25, 4.5) {};
		\node [style=nodee] (17) at (-2.5, 2.5) {};

\end{scope}

\end{scope}

	\end{tikzpicture}

(c) Cut type II
	\endminipage
	
	\caption{Hypergraph $H$ and its convex cuts.}
	\label{fig:t}
\end{figure}

 There are two different types of cuts in $H$. 	
The cut of type I consists of the central $6$-edge and three $2$-edges that do not intersect it as can be seen in Figure~\ref{fig:t}(b). A cut of type II consists of a non-central $6$-edge and its opposite $2$-edge as can be seen in Figure~\ref{fig:t}(c). Both cuts are convex and also the conclusion of Proposition~\ref{prop:2} holds. This, together with the fact that $E(H)$ partitions into one cut of type I and six cuts of type II, allows us to use the cut method to calculate Wiener index of $H$ as
\begin{align*}
	W(H) = \binom{6}{2}7\cdot 7 + 6\left(\binom{4}{2} +4\cdot7 + 4\cdot 31 + 7\cdot 31 \right) =2985.		
\end{align*}

\section*{Acknowledgements}
 
This work has been supported by the financial support from the Slovenian Research Agency (research core funding P1-0297 and projects J1-2452 and N1-0285).

\section*{Declaration of interests}
 
The authors declare that they have no conflict of interest. 
\section*{Data availability}
 
Our manuscript has no associated data.
		
		\baselineskip14pt

\end{document}